\theoremstyle{plain}
\newtheorem{thm}{Theorem}[section]
\newtheorem{pro}[thm]{Proposition}
\newtheorem{lem}[thm]{Lemma}
\theoremstyle{definition}
\newtheorem{defn}[thm]{Definition}
\newtheorem{exa}[thm]{Example}
\newtheorem{rem}[thm]{Remark}
\numberwithin{equation}{section}
\newcommand{\bR}{{\mathbb R}}
\newcommand{\bN}{{\mathbb N}}
\renewcommand{\a}{\alpha}
\renewcommand{\b}{\beta}
\renewcommand{\c}{\gamma}
\renewcommand{\l}{\lambda}
\newcommand{\e}{\epsilon}
\newcommand{\Ra}{\Rightarrow}
\renewcommand{\d}{\delta}
\renewcommand{\(}{\left(}
\renewcommand{\)}{\right)}
\renewcommand{\[}{\left[}
\renewcommand{\]}{\right]}
\DeclareMathOperator{\Id}{Id}
\begin{document}
\title[Nonlocal Neumann BVPs]{Non-trivial solutions of local and non-local Neumann boundary-value problems}


\subjclass[2010]{Primary 34B10, secondary  34B18, 34B27, 47H30}%
\keywords{Fixed point index, cone, nontrivial solution, Neumann boundary conditions.}%

\author{Gennaro Infante}
\address{Gennaro Infante, Dipartimento di Matematica e Informatica, Universit\`{a} della
Calabria, 87036 Arcavacata di Rende, Cosenza, Italy}%
\email{gennaro.infante@unical.it}%

\author{Paolamaria Pietramala}%
\address{Paolamaria Pietramala, Dipartimento di Matematica e Informatica, Universit\`{a} della
Calabria, 87036 Arcavacata di Rende, Cosenza, Italy}%
\email{pietramala@unical.it}%

\author[F. A. F. Tojo]{F. Adri\'an F. Tojo}
\address{F. Adri\'an F. Tojo, Departamento de An\'alise Ma\-te\-m\'a\-ti\-ca, Facultade de Matem\'aticas,
Universidade de Santiago de Com\-pos\-te\-la, 15782 Santiago de Compostela, Spain}%
\email{fernandoadrian.fernandez@usc.es}%

\thanks{Published in:  
Proceedings of the Royal Society of Edinburgh: Section A Mathematics, (2016) 146, pp~337-369. Doi:10.1017/S0308210515000499.}

\begin{abstract}
We prove new results on the existence, non-existence, localization and multiplicity of nontrivial solutions for perturbed Hammerstein integral equations. Our approach is topological and relies on the classical fixed point index. Some of the criteria involve a comparison with the spectral radius of some related linear operators. We apply our results to some boundary value problems with local and nonlocal boundary conditions of Neumann type. We illustrate in some examples the methodologies used.
\end{abstract}

\maketitle

\section{Introduction}
In this paper we discuss the existence, localization, multiplicity and non-existence of nontrivial solutions of the second order differential equation
\begin{equation}\label{eqintr}
 u''(t) +h(t,u(t))=0,\ t\in (0,1),
\end{equation}
subject to (local) Neumann boundary conditions  (BCs)
\begin{equation}\label{bcintrloc}
u'(0)=u'(1)=0,
\end{equation}
or to non-local BCs of Neumann type
\begin{equation}\label{bcintrnloc}
u'(0)=\alpha[u],\quad u'(1)=\beta[u],
\end{equation}
where $\alpha[\cdot]$, $\beta[\cdot]$ are
linear functionals given by Stieltjes integrals, namely
\begin{equation*}
\alpha[u]=\int_0^1 u(s)\,dA(s),\quad \beta[u]=\int_0^1 u(s)\,dB(s).
\end{equation*}
The local BVP \eqref{eqintr}-\eqref{bcintrloc} has been studied by Miciano and Shivaji in \cite{mish}, where the authors proved the existence of multiple positive solutions, by means of the quadrature technique; using Morse theory, Li~\cite{li} proved the existence of positive solutions and Li and co-authors \cite{lilima} continued the study of~\cite{li} and proved the existence of multiple solutions. Multiple positive solutions were also investigated by Boscaggin~\cite{Bos} via shooting-type arguments.

Note that, since $\lambda=0$ is an eigenvalue of the associated linear problem 
$$
u''(t)+\lambda u(t)=0,\quad u'(0)=u'(1)=0,
$$ 
the corresponding Green's function does not exist. Therefore we use a shift argument similar to the ones in \cite{xHanJMAA07,Tor, jwmz-na} and we study two related BVPs for which the Green's function can be constructed, namely
\begin{equation}\label{shiftint1}
- u''(t)- \omega^2 u(t)=f(t,u(t)):=h(t,u(t))- \omega^2 u(t),\quad u'(0)=u'(1)=0,
\end{equation}
and (with an abuse of notation)
\begin{equation}\label{shiftint2}
- u''(t)+ \omega^2 u(t)=f(t,u(t)):=h(t,u(t))+ \omega^2 u(t),\quad u'(0)=u'(1)=0.
\end{equation}
The BVPs \eqref{shiftint1} and  \eqref{shiftint2} have been recently object of interest by a number of authors, see for example \cite{BonPiz, Don, Feng,  Sun2, Sun,  Wang1, Wan,  Wang2, Yao2, Yao,  Zhan, zhli, Zhil}; in Section~\ref{Greenstudy} we study in details the properties of the associated Green's functions and we improve 
and complement some estimates that occur in earlier papers, see Remark~\ref{cintre}.

The formulation of the nonlocal BCs in terms of linear functionals is fairly general and includes, as special cases, multi-point and integral conditions, namely
$$
\a[u]=\sum_{j=1}^m\a_j u(\eta_j)\quad\text{or}\quad \a[u]=\int_{0}^1\phi(s)u(s)ds. 
$$
 We mention that multi-point and integral BCs are widely studied objects. The study of multi-point BCs was, as far as we know, initiated in 1908 by Picone~\cite{Picone}. Reviews on differential equations with BCs involving Stieltjes measures has been written in 1942 by Whyburn~\cite{Whyburn} and in 1967 by Conti~\cite{Conti}. We mention also the (more recent) reviews of Ma~\cite{rma},  Ntouyas~\cite{sotiris} and \v{S}tikonas~\cite{Stik} and the papers by Karakostas and Tsamatos~\cite{kttmna, ktejde} and by Webb and Infante~\cite{jwgi-lms}.
 
One motivation for studying nonlocal problems in the context of Neumann problems is that they occur naturally when modelling heat-flow problems. 

 For example the four point BVP
\begin{equation*}
 u''(t) +h(t,u(t))=0,\quad u'(0)=\a u(\xi),\; u'(1)=\beta u(\eta),\ {\xi,\eta}\in [0,1],
\end{equation*}
models a thermostat 
where two controllers at $t=0$ and $t=1$ add or remove heat
according to the temperatures detected by two sensors at $t=\xi$ and
$t=\eta$.
Thermostat  models of this type were studied in a number of papers, see for example
\cite{Cab1,Fan-Ma, gi-poit, gi-caa, gijwems, Kar-Pal, pp-gi-pp-aml-06, jwpomona, jwwcna04, jw-narwa} and references therein. In particular Webb~\cite{jw-narwa}
studied the existence of \emph{positive} solutions of the BVP 
\begin{equation*}
 u''(t) +h(t,u(t))=0,\quad u'(0)=\alpha[u],\; u'(1)=-\beta[u].
\end{equation*}
The methodology in~\cite{jw-narwa} is somewhat different from ours and relies on a careful rewriting of the associated Green's function, due to the presence of the term  $-\beta[u]$ in the BCs. 
The existence of solutions that \emph{change sign} have been investigated by Fan and Ma \cite{Fan-Ma}, in the case of the BVP
$$
 u''(t) +h(t,u(t))=0,\quad u'(0)=\a u(\xi),\; u'(1)=-\beta u(\eta),\ {\xi,\eta}\in [0,1],
$$
and in \cite{Cab1, gi-pp, gijwems} for the BVP
$$
 u''(t) +h(t,u(t))=0,\quad u'(0)=-\alpha[u],\; u'(1)=-\beta u(\eta),\ {\eta}\in [0,1].
$$
A common feature of the papers \cite{Cab1, Fan-Ma, gi-pp, gijwems} is that a direct construction of a Green's function is possible due to the term $-\beta u(\eta)$.

In Section \ref{secham} we develop a fairly general theory for the existence and multiplicity of nontrivial solutions of the perturbed Hammerstein integral equation of the form
\begin{equation}\label{perintro}
u(t)={\gamma}(t)\alpha[u]+{\delta(t)}{\beta}[u] +\int_0^1 k(t,s)g(s)f(s,u(s))\,ds,
\end{equation}
that covers, as \emph{special cases}, the BVP~\eqref{eqintr}, \eqref{bcintrnloc} and the BVP~\eqref{eqintr}- \eqref{bcintrloc} when $\alpha$ and $\beta$ are the trivial functionals.
We recall that the existence of \emph{positive solutions} of this type of integral equations has been investigated by Webb and Infante in~\cite{jwgi-lms}, under a non-negativity assumption on the terms $\gamma, \delta, k$, by working on a suitable cone of positive functions that takes into account the functionals $\alpha, \beta$.

In Section~\ref{secnon} we provide some sufficient conditions on the nonlinearity $f$ for the non-existence of solutions of the equation~\eqref{perintro}, this is achieved via an associated Hammerstein integral equation 
\begin{equation*}
u(t)= \int_0^1k_S(t,s)g(s)f(s,u(s))ds,
\end{equation*}
whose kernel $k_S$ is allowed to change sign and is constructed in the line of~\cite{jwgi-lms}, where the authors dealt with positive kernels.

In Section~\ref{seceigen} we provide a number of results that link the existence of nontrivial solutions of the equation~\eqref{perintro} with the spectral radius of some associated linear integral operators. The main tool here is the celebrated Krein-Rutman Theorem, combined with some ideas from the paper of Webb and Lan~\cite{jwkleig}; here due to the non-constant sign of the Green's function the situation is more delicate than the one in~\cite{jwkleig} and we introduce a number of different linear operators that yield different growth restrictions on the nonlinearity $f$.

In Section~\ref{secex} we illustrate the applicability of our theory in three examples,  two of which deal with solutions that change sign. The third example is taken from an interesting paper by Bonanno and Pizzimenti~\cite{BonPiz}, where the authors proved the existence, with respect to the parameter $\lambda$, of positive solutions of the following BVP 
$$
-u''(t)+u(t)=\l te^{u(t)},\quad  u'(0)=u'(1)=0.
$$
The methodology used in~\cite{BonPiz} relies on a critical point Theorem of Bonanno~\cite{Bon1}. Here we enlarge the range of the parameters and provide a sharper localization result. We also  prove a non-existence result for this BVP.

Our results complement the ones of \cite{jwgi-lms}, focusing the attention on the existence of solutions that are allowed to \emph{change sign}, in the spirit of the earlier works~\cite{gijwjmaa, gijwjiea, gijwems}. The approach that we use is topological, relies on classical fixed point index theory and we make use of ideas from the papers~\cite{Cab1,gijwjiea, jw-tmna, jwgi-lms, jwkleig}.

\section{Nonzero solutions of perturbed Hammerstein integral equations}\label{secham}
In this Section we  study the existence of solutions of the perturbed Hammerstein equations of the type
\begin{equation}\label{eqthamm}
u(t)={\gamma}(t)\alpha[u]+{\delta(t)}{\beta}[u] +\int_0^1 k(t,s)g(s)f(s,u(s))\,ds:=Tu(t),
\end{equation}
where
\begin{equation*}
 \alpha[u]=\int_0^1 u(s)\,dA(s),\;\;\beta[u]=\int_0^1 u(s)\,dB(s),
\end{equation*}
and $A$ and $B$ are functions of bounded variation.
If we set
\begin{equation*}
Fu(t):=\int_0^1 k(t,s)g(s)f(s,u(s))\,ds
\end{equation*}
we can write
\begin{equation*}
Tu(t)={\gamma}(t)\alpha[u]+{\delta(t)}{\beta}[u]+Fu(t),
\end{equation*}
that is, we consider $T$ as a perturbation of the simpler operator $F$.\par
 We work in  the space $C[0,1]$ of the continuous functions on $[0,1]$ endowed with the usual norm
 $\|w\|:=\max\{|w(t)|,\; t\; \in [0,1]\}$.\\
We make the following assumptions on the terms that occur in~\eqref{eqthamm}.
\begin{enumerate}
\item [$(C_{1})$] $k:[0,1] \times[0,1]\rightarrow \bR$ is measurable, and for every $\tau\in
[0,1] $ we have
\begin{equation*}
\lim_{t \to \tau} |k(t,s)-k(\tau,s)|=0 \;\text{    for almost every   } s \in [0,1] .
\end{equation*}{}
\item [$(C_{2})$] There exist a subinterval $[a,b] \subseteq [0,1]$, a function $\Phi \in L^{\infty}[0,1]$, and a constant $c_{1} \in (0,1]$ such that
\begin{align*}
|k(t,s)|\leq \Phi(s) \text{ for } &t \in [0,1] \text{ and almost every } \, s\in [0,1], \\ k(t,s) \geq c_{1}\Phi(s) \text{ for } &t\in
[a,b] \text{ and almost every } \, s \in [0,1] .
\end{align*}
\item [$(C_{3})$] $g\,\Phi \in L^1[0,1] $, $g(s) \geq 0$ for almost every $s\in [0,1]$, and $\int_a^b \Phi(s)g(s)\,ds >0$.{}
\item  [ $(C_{4})$] The nonlinearity $f:[0,1]\times (-\infty,\infty) \to [0,\infty)$ satisfies Carath\'{e}odory conditions, that is, $f(\cdot,u)$ is measurable for each fixed $u\in (-\infty,\infty)$ , $f(t,\cdot)$ is continuous for almost every $t\in [0,1]$, and for each $r>0$, there exists $\phi_{r} \in L^{\infty}[0,1] $ such that{}
\begin{equation*}
f(t,u)\le \phi_{r}(t) \;\text{ for all } \; u\in [-r,r],\;\text{ and almost every } \; t\in [0,1] .
\end{equation*}{}
\item[$(C_{5})$]{} $A, B$ are functions of bounded variation and $\mathcal{K}_{A}(s),\; \mathcal{K}_{B}(s)\geq 0$ for almost every $s\in [0,1]$, where
\begin{equation*}
\mathcal{K}_{A}(s):=\int_{0}^{1} k(t,s)\,dA(t)\text{ and } \mathcal{K}_{B}(s):=\int_{0}^{1} k(t,s)\,dB(t).
\end{equation*}{}
\item[$(C_{6})$] $\gamma \in C [0,1] , \; 0 \leq \alpha[\gamma] <1, \;\;\beta[\gamma]\geq 0.$\\
$ \text{There exists}\; c_{2} \in(0,1] \;\text{such that}\; \gamma(t) \geq c_{2}\|\gamma\| \;\text{for}\; t \in [a,b]$.
\item[$(C_{7})$] $\delta \in C[0,1] , \; 0 \leq \beta[\delta] <1,\;\;\alpha[\delta]\geq 0.$\\
$ \text{There exists}\; c_{3} \in(0,1] \;\text{such that}\; \delta(t) \geq c_{3}\|\delta\| \;\text{for}\; t \in [a,b]$.
\item [$(C_{8})$] $ D:=(1-\alpha[\gamma])(1-\beta[\delta]) -\alpha[\delta]\beta[\gamma]> 0$.{}
\end{enumerate}
From $(C_6)$-$(C_8)$ it follows that, for $\lambda\ge 1$,
$$
 D_{\lambda}:=(\lambda-\alpha[\gamma])(\lambda-\beta[\delta])-\alpha[\delta]\beta[\gamma]\geq D>0.
$$
We recall that a \emph{cone} $K$ in a Banach space $X$  is a closed
convex set such that $\lambda \, x\in K$ for $x \in K$ and
$\lambda\geq 0$ and $K\cap (-K)=\{0\}$.
The assumptions above allow us to work in the cone
$$
K:=\{u\in C[0,1]:\ \min_{t\in [a,b]}u(t)\ge c\|u\|,\ \alpha[u],\beta[u]\ge0\}
$$
where $c=\min\{c_1,c_2,c_3\}$.

Note that we have
$$K=K_0 \cap\{u\in C[0,1]: \alpha[u] \geq 0\}\cap\{u\in C[0,1]: \beta[u] \geq 0\},$$
where $$K_{0}:=\{u\in C[0,1]: \min_{t \in [a,b]}u(t)\geq c \|u\|\}.$$ 

The functions in $K_0$ are positive on the
subset $[a,b]$ but are allowed to change sign in $[0,1]$.
The cone $K_0$ is similar to
a cone of \emph{non-negative} functions
 first used by Krasnosel'ski\u\i{}, see e.g. \cite{krzab}, and D.~Guo, see e.g. \cite{guolak}. $K_0$ has been introduced by Infante and Webb in \cite{gijwjiea}
and later used in~\cite{ac-gi-at-bvp, Cab1, Fan-Ma, dfgior1, dfgior2,  giems, gi-pp1, gi-pp, gijwjmaa, gijwnodea, gijwems, nietopim}.
The cone $K$ allows the use of signed measures, taking into account two functionals. In the case of one functional this has been done in~\cite{Cab1}, where the authors dealt also with nontrivial solutions of the perturbed integral equation
\begin{equation}\label{one-pert-intro}
u(t)={\gamma}(t)\alpha[u] +\int_0^1 k(t,s)g(s)f(s,u(s))\,ds.
\end{equation}
In~\cite{Cab1} the authors work in the cone 
$K_0 \cap\{u\in C[0,1]: \alpha[u] \geq 0\}$, extending earlier the results in~\cite{gijwems} to the case of signed measures and the ones from~\cite{jwgi-nodea-08} to the context of nontrivial solutions. Clearly \eqref{one-pert-intro} is a special case of \eqref{eqthamm} and, by considering $\beta$ the trivial functional, we have
$K=K_0 \cap\{u\in C[0,1]: \alpha[u] \geq 0\}$.

A similar observation holds for the Hammerstein case 
\begin{equation}\label{ham-intro}
u(t)=\int_0^1 k(t,s)g(s)f(s,u(s))\,ds,
\end{equation}
studied in \cite{giems, gijwjmaa, gijwjiea} by means of the cone $K=K_0$. We mention that multiple solutions of \eqref{ham-intro} were investigated in the case of symmetric, sign changing kernels by Faraci and Moroz~\cite{farmor} by variational methods.

We also stress that, if we denote by $P$ the cone of positive functions, namely 
$$
P:=\{u\in C[0,1]:\ u(t)\geq 0, t\in [0,1]\},
$$
and consider $K\cap P$, we regain the cone of positive functions introduced by Webb and Infante in \cite{jwgi-lms}.

First of all we prove that $T$ leaves $K$ invariant and is compact.
\begin{lem}
The operator \eqref{eqthamm} maps $K$ into $K$ and is compact.
\end{lem}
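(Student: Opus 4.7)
The proof has two parts: showing the invariance $T(K)\subset K$ and showing compactness, both of which proceed by essentially standard estimates once the assumptions are unpacked. First I would observe that, for any $u\in K$ (in fact any $u\in C[0,1]$ satisfying the relevant bounds), $Tu\in C[0,1]$: the continuity of the integral part follows from $(C_1)$ and the dominated convergence theorem, using $|k(t,s)g(s)f(s,u(s))|\leq \Phi(s)g(s)\phi_{\|u\|}(s)\in L^1[0,1]$ by $(C_2),(C_3),(C_4)$; the two perturbation terms $\gamma(t)\alpha[u]$ and $\delta(t)\beta[u]$ are continuous in $t$ because $\gamma,\delta\in C[0,1]$ and $\alpha[u],\beta[u]$ are fixed real numbers.

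For the cone-invariance I would prove the two defining conditions of $K$ separately. For the first, the upper bound of $(C_2)$ and the non-negativity of $f,g,\alpha[u],\beta[u]$ yield, for every $t\in[0,1]$,
\[
|Tu(t)| \leq \|\gamma\|\alpha[u] + \|\delta\|\beta[u] + \int_0^1 \Phi(s)g(s)f(s,u(s))\,ds,
\]
and hence the same bound holds for $\|Tu\|$. For $t\in[a,b]$ the lower bounds in $(C_2),(C_6),(C_7)$ give
\[
Tu(t) \geq c_2\|\gamma\|\alpha[u] + c_3\|\delta\|\beta[u] + c_1\int_0^1 \Phi(s)g(s)f(s,u(s))\,ds \geq c\,\|Tu\|,
\]
with $c=\min\{c_1,c_2,c_3\}$. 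For the second condition, linearity of the Stieltjes integral and Fubini's theorem (justified by $(C_3),(C_4)$) give
\[
\alpha[Tu] = \alpha[\gamma]\,\alpha[u] + \alpha[\delta]\,\beta[u] + \int_0^1 \mathcal{K}_A(s)g(s)f(s,u(s))\,ds,
\]
which is non-negative by $(C_5),(C_6),(C_7)$ and the fact that $\alpha[u],\beta[u]\geq 0$. The analogous computation yields $\beta[Tu]\geq 0$, so $Tu\in K$.

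For compactness I would use the Arzelà--Ascoli theorem. Given a bounded set $U\subset K$ with $\|u\|\leq r$ for $u\in U$, the Stieltjes functionals satisfy $|\alpha[u]|\leq r\,V_0^1(A)$ and $|\beta[u]|\leq r\,V_0^1(B)$, and the estimate above together with $f(s,u(s))\leq \phi_r(s)$ from $(C_4)$ yields a uniform bound on $\|Tu\|$. For equicontinuity, for $t,\tau\in[0,1]$ and $u\in U$,
\[
|Tu(t)-Tu(\tau)| \leq |\gamma(t)-\gamma(\tau)|\,|\alpha[u]| + |\delta(t)-\delta(\tau)|\,|\beta[u]| + \int_0^1 |k(t,s)-k(\tau,s)|g(s)\phi_r(s)\,ds.
\]
The first two terms tend to $0$ as $t\to\tau$ by the continuity of $\gamma,\delta$; the integral term tends to $0$ by $(C_1)$ and the dominated convergence theorem, with dominating function $2\Phi(s)g(s)\phi_r(s)\in L^1[0,1]$. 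Therefore $T(U)$ is relatively compact.

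I do not anticipate a genuinely hard step: the only care needed is in the bookkeeping with the Stieltjes functionals (using Fubini to commute them with the kernel integral) and in finding an integrable dominating function for the equicontinuity argument; both are handled immediately by the standing assumptions $(C_1)$--$(C_8)$.
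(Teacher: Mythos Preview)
Your proof is correct and matches the paper's argument for cone invariance essentially line by line. The only cosmetic difference is in the compactness step: the paper observes that $T$ is the sum of the Hammerstein operator $F$ (whose compactness is ``well-known'') and the finite-rank perturbation $u\mapsto\gamma\,\alpha[u]+\delta\,\beta[u]$, whereas you give a direct Arzel\`a--Ascoli argument for $T$ as a whole; both are standard and equivalent.
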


\begin{proof}
Take $u\in K$ such that $\| u\| \leq r$. First of all, we observe that  $Tu(t)\geq 0$ for $t\in [a,b]$. We have, for 
$t\in [0,1]$,
\begin{equation*}
| Tu(t)|\leq |\gamma(t)|\alpha[u]+|\delta(t)|\beta[u]+\int_{0}^{1}|k(t,s)|g(s)f(s,u(s))\,ds,
\end{equation*}
therefore, taking the supremum on $t\in [0,1]$, we get
\begin{equation*}
\| Tu\| \leq \| \gamma\| \alpha[u]+\|\delta\|\beta [u]+\int_{0}^{1}\Phi
(s)g(s)f(s,u(s))\,ds,
\end{equation*}%
and, combining this fact with $(C_2)$, $(C_6)$ and $(C_7)$,
\begin{eqnarray*}
\min_{t\in [a,b]}Tu(t) &\geq &c_{2}\| \gamma \| \alpha[u]+c_{3}\| \delta\| \beta[u]+c_{1}\int_{0}^{1}\Phi(s)g(s)f(s,u(s))\,ds \\
&\geq &c \Vert Tu\Vert .
\end{eqnarray*}%
Furthermore, by $(C_3)$ and $(C_5)$-$(C_7)$,
$$
\alpha [Tu]=\alpha[\gamma]\alpha[u]+\alpha[\delta]{\beta}[u]+\int_0^1 \mathcal{K}_{A}(s)g(s)f(s,u(s))\,ds \geq 0
$$
and
$$
\beta [Tu]=\beta[\gamma]\alpha[u]+\beta[\delta]{\beta}[u]+\int_0^1 \mathcal{K}_{B}(s)g(s)f(s,u(s))\,ds \geq 0.
$$
Hence we have $Tu\in K$.\\ 
Moreover, the map $T$ is compact since it is sum of three compact maps: the compactness of $F$ is well-known and, since $\gamma
$ and $\delta$ are continuous, the perturbation $\gamma (t)\alpha[u]+\delta(t)\beta[u]$ maps bounded sets into
bounded subsets of a finite dimensional space.
\end{proof}

For $\rho>0$ we define the following open subsets of $K$:
\begin{equation*}
  K_{\rho}:=\{u \in K: \|u\|<{\rho}\}, \;\;
  V_{\rho}:=\{u \in K: \min_{t\in[a,b]}u(t)<{\rho}\}.
\end{equation*}
We have  $K_{\rho}\subset V_{\rho}\subset K_{\rho/c}$.\\
We recall some useful facts concerning real $2 \times 2$ matrices:

\begin{defn}\cite{jwgi-lms}
A $\;2 \times 2$ matrix $\mathcal{Q}$ is said to be order preserving (or non-negative) if $p_{1}\geq p_{0}$, $q_{1}\geq q_{0}$
imply
\begin{equation*}
\mathcal{Q}
\begin{pmatrix}
  p_{1} \\
  q_{1}
\end{pmatrix}%
\geq \mathcal{Q}
\begin{pmatrix}
  p_{0} \\
  q_{0}
\end{pmatrix},
\end{equation*}
in the sense of components.
\end{defn}
We have the following property, as stated in \cite{jwgi-lms}, whose proof is straightforward.
\begin{lem}\label{lematrix2}
Let
\begin{equation*}
\mathcal{Q}=
\begin{pmatrix}
  a & -b \\
  -c & d
\end{pmatrix}
\end{equation*}
with $a,b,c,d\geq 0$ and $\det \mathcal{Q}> 0$. Then $\mathcal{Q}^{-1}$ is order preserving.
\end{lem}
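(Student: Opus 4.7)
The plan is to prove the lemma by explicitly writing down $\mathcal{Q}^{-1}$ and observing that under the stated sign hypotheses all of its entries are nonnegative; from there, being order preserving is immediate.

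First I would invoke the classical formula for the inverse of an invertible $2\times 2$ matrix, which gives
\[
\mathcal{Q}^{-1}=\frac{1}{\det\mathcal{Q}}\begin{pmatrix} d & b \\ c & a \end{pmatrix}.
\]
Since $\det\mathcal{Q}>0$ by hypothesis and $a,b,c,d\ge 0$, every entry of $\mathcal{Q}^{-1}$ is nonnegative.

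Next I would reduce the order-preserving condition to a statement about the difference vector. Suppose $p_1\ge p_0$ and $q_1\ge q_0$. Setting $\Delta:=(p_1-p_0,\,q_1-q_0)^\top$, both components of $\Delta$ are nonnegative, and by linearity
\[
\mathcal{Q}^{-1}\begin{pmatrix} p_1 \\ q_1 \end{pmatrix}-\mathcal{Q}^{-1}\begin{pmatrix} p_0 \\ q_0 \end{pmatrix}=\mathcal{Q}^{-1}\Delta.
\]
Since the entries of $\mathcal{Q}^{-1}$ and of $\Delta$ are all nonnegative, the right-hand side has nonnegative components, which is precisely the desired order-preservation.

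There is no genuine obstacle here: the whole statement is a direct consequence of Cramer's rule together with the trivial fact that a matrix with nonnegative entries sends the nonnegative orthant into itself. The only point one might flag is that positivity of $\det\mathcal{Q}$ (as opposed to mere invertibility) is what forces the common factor $1/\det\mathcal{Q}$ to be positive and hence preserves the signs of $a,b,c,d$ in the inverse.
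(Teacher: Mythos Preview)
Your proof is correct and is precisely the straightforward computation the paper has in mind; the paper itself omits the proof, simply noting that the result is immediate. Computing $\mathcal{Q}^{-1}$ explicitly and observing that all its entries are nonnegative under the hypotheses is exactly the intended argument.
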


\begin{rem} \label{rem1}
It is a consequence of Lemma \ref{lematrix2} that if
 \begin{equation*}
\mathcal{N}=
\begin{pmatrix}
  1-a & -b\\
  -c & 1-d
\end{pmatrix},
\end{equation*}
 satisfies the hypotheses of Lemma \ref{lematrix2}, $p \geq 0, q \geq 0$ and $\mu>1$ then
$$
\mathcal{N}_\mu^{-1}\begin{pmatrix}
  p \\
  q
\end{pmatrix}\leq \mathcal{N}^{-1}\begin{pmatrix}
  p \\
  q
\end{pmatrix},
$$
where
 \begin{equation*}
\mathcal{N}_\mu=
\begin{pmatrix}
  \mu-a & -b\\
  -c & \mu-d
\end{pmatrix}.
\end{equation*}
\end{rem}
The next Lemma summarises some classical results regarding the fixed point index, for more details see~\cite{amann,guolak}.
If $\Omega$ is a open bounded subset of a cone $K$ (in the relative
topology) we denote by $\overline{\Omega}$ and $\partial \Omega$
the closure and the boundary relative to $K$. When $\Omega$ is an open
bounded subset of $X$ we write $\Omega_K=\Omega \cap K$, an open subset of
$K$.
\begin{lem} 
Let $\Omega$ be an open bounded set with $0\in \Omega_{K}$ and $\overline{\Omega}_{K}\ne K$. Assume that $F:\overline{\Omega}_{K}\to K$ is
a compact map such that $x\neq Fx$ for all $x\in \partial \Omega_{K}$. Then the fixed point index $i_{K}(F, \Omega_{K})$ has the following properties.
\begin{itemize}
\item[(1)] If there exists $e\in K\setminus \{0\}$ such that $x\neq Fx+\lambda e$ for all $x\in \partial \Omega_K$ and all $\lambda
>0$, then $i_{K}(F, \Omega_{K})=0$.
\item[(2)] If  $\mu x \neq Fx$ for all $x\in \partial \Omega_K$ and for every $\mu \geq 1$, then $i_{K}(F, \Omega_{K})=1$.
\item[(3)] If $i_K(F,\Omega_K)\ne0$, then $F$ has a fixed point in $\Omega_K$.
\item[(4)] Let $\Omega^{1}$ be open in $X$ with $\overline{\Omega^{1}}\subset \Omega_K$. If $i_{K}(F, \Omega_{K})=1$ and
$i_{K}(F, \Omega_{K}^{1})=0$, then $F$ has a fixed point in $\Omega_{K}\setminus \overline{\Omega_{K}^{1}}$. The same result holds if
$i_{K}(F, \Omega_{K})=0$ and $i_{K}(F, \Omega_{K}^{1})=1$.
\end{itemize}
\end{lem}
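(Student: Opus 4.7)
The statement collects four standard properties of the fixed point index on a cone, so my plan is simply to verify each in turn using the four structural axioms of the index (normalisation, homotopy invariance, additivity, and the solution property), as presented in Amann or Guo--Lakshmikantham. No new construction is needed; the point of the lemma is to record in one place the facts that will be used repeatedly in later sections.

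For part (1), I would consider the homotopy $H(x,\lambda):=Fx+\lambda e$ on $\overline{\Omega}_K\times[0,\Lambda]$ for $\Lambda$ sufficiently large. The hypothesis guarantees $x\ne H(x,\lambda)$ on $\partial\Omega_K$ for every $\lambda>0$, and at $\lambda=0$ it coincides with the standing assumption $x\ne Fx$. Choosing $\Lambda$ so large that $\|\Lambda e\|$ exceeds the diameter of $\Omega_K$ forces $H(\cdot,\Lambda)$ to have no fixed point in $\overline{\Omega}_K$, hence $i_K(H(\cdot,\Lambda),\Omega_K)=0$; by homotopy invariance $i_K(F,\Omega_K)=0$. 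For part (2), the natural homotopy is $H(x,t):=tFx$ for $t\in[0,1]$. The hypothesis $\mu x\ne Fx$ for $\mu\ge 1$ reparametrises (setting $t=1/\mu$) to $x\ne tFx$ for $t\in(0,1]$, and at $t=0$ the map is the constant $0\in\Omega_K$, which has no fixed point on the boundary either. By homotopy invariance $i_K(F,\Omega_K)=i_K(0,\Omega_K)$, and by the normalisation axiom the latter equals $1$.

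Part (3) is immediate from the solution property: if $F$ had no fixed point in $\Omega_K$, then by excision the index would agree with $i_K(F,\emptyset)=0$, contradicting the hypothesis. Part (4) follows from additivity applied to the disjoint decomposition $\Omega_K=\Omega_K^1\cup(\Omega_K\setminus\overline{\Omega_K^1})$, which (since $F$ has no fixed points on $\partial\Omega_K^1$ by hypothesis, otherwise the index $i_K(F,\Omega_K^1)$ would be undefined) yields
\[
i_K(F,\Omega_K)=i_K(F,\Omega_K^1)+i_K(F,\Omega_K\setminus\overline{\Omega_K^1}).
\]
In either case $(1,0)$ or $(0,1)$ the index on the annular region is nonzero, so (3) furnishes a fixed point there.

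The only subtlety worth flagging is the setting: the index must be the one defined on the relatively open subset $\Omega_K$ of the cone $K$ (not on an open set of the ambient Banach space), because our operator $T$ is defined and compact only on $K$. Once this is acknowledged, each item above is a textbook consequence of the axioms, so I do not expect any genuine obstacle; the statement is essentially a convenient repackaging of results from \cite{amann,guolak}.
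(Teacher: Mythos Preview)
Your sketch is correct, but note that the paper does not actually prove this lemma: it is stated without proof as a summary of classical facts, with a pointer to \cite{amann,guolak} for details. What you have written is precisely the standard textbook derivation from the axioms of the fixed point index found in those references, so in effect your approach and the paper's (deferred) approach coincide.
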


The following Proposition will be useful in the sequel, we give the proof for completeness.
\begin{pro}
Let $\omega\in L^1[0,1]$ and denote 
$$
\omega^+(s)=\max\{\omega(s),0\},\,\,\, \omega^-(s)=\max\{-\omega(s),0\}. 
$$
Then we have
$$
\left|\int_0^1 \omega(s)ds\right|\le\max\left\{\int_0^1 \omega^+(s)ds,\int_0^1 \omega^-(s)ds\right\}\le \int_0^1 |\omega(s)|ds.
$$
\end{pro}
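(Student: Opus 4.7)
The plan is to decompose $\omega$ into its positive and negative parts and reduce both inequalities to elementary facts about non-negative real numbers. Set $I^+ := \int_0^1 \omega^+(s)\,ds$ and $I^- := \int_0^1 \omega^-(s)\,ds$; note that both are non-negative and finite since $\omega^\pm \le |\omega| \in L^1[0,1]$.

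For the first inequality, I would use the pointwise identity $\omega = \omega^+ - \omega^-$ to write $\int_0^1 \omega(s)\,ds = I^+ - I^-$, so that $\left|\int_0^1 \omega(s)\,ds\right| = |I^+ - I^-|$. Since $I^+,I^-\ge 0$, the two cases $I^+ \ge I^-$ and $I^+ < I^-$ give respectively $|I^+ - I^-| = I^+ - I^- \le I^+$ and $|I^+ - I^-| = I^- - I^+ \le I^-$, and in either case the quantity is bounded by $\max\{I^+, I^-\}$.

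For the second inequality, I would use $|\omega| = \omega^+ + \omega^-$ pointwise, so $\int_0^1 |\omega(s)|\,ds = I^+ + I^-$. Since both summands are non-negative, $\max\{I^+, I^-\} \le I^+ + I^-$, which is exactly the claim.

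There is no real obstacle: the argument is purely a consequence of the Jordan decomposition $\omega = \omega^+ - \omega^-$, linearity of the Lebesgue integral, and the elementary inequality $\max\{a,b\} \le a+b$ for $a,b \ge 0$. The only thing to be careful about is ensuring measurability and integrability of $\omega^\pm$, which follow from $\omega$ being measurable and $|\omega^\pm| \le |\omega|$.
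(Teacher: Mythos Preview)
Your proof is correct and follows essentially the same approach as the paper: both arguments use the decomposition $\omega=\omega^+-\omega^-$ to bound $\int\omega$ and $-\int\omega$ (equivalently, your case split on the sign of $I^+-I^-$) by $I^+$ and $I^-$ respectively, and both obtain the second inequality from $|\omega|=\omega^++\omega^-$.
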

\begin{proof}
 Observing that, since $\omega=\omega^+ - \omega^-$,
\begin{align*}\int_0^1 \omega(s)ds & =\int_0^1 \omega^+(s)ds-\int_0^1 \omega^-(s)ds\le\int_0^1 \omega^+(s)ds,\\
-\int_0^1 \omega(s)ds & =\int_0^1 \omega^-(s)ds-\int_0^1 \omega^+(s)ds\le\int_0^1 \omega^-(s)ds,\end{align*}
we get the first inequality, the second comes from the fact that $|\omega|=\omega^++\omega^-$.
\end{proof}
We now give a sufficient condition on the growth of the nonlinearity that provides that the index is $1$ on $K_{\rho}$.

\begin{lem}\label{thmind1}
Assume that 
\begin{enumerate}
  \item[$(\mathrm{I}_{\rho }^{1})$] \label{EqB} there exists $\rho>0$ such that
\begin{multline}\label{eqind1b}
   f^{ -\rho,{\rho}}\, \left(\sup_{t \in [0,1]  }\left\{ \left(\dfrac{|\gamma(t)|}{D}(1-\beta[\delta])+\dfrac{|\delta(t)|}{D}\beta[\gamma]\right)\int_0^1 \mathcal{K}_{A}(s)g(s)\,ds\right.\right.\\
+\left(\dfrac{|\gamma(t)|}{D}\alpha[\delta]+\dfrac{|\delta(t)|}{D}(1-\alpha[\gamma])\right) \int_0^1\mathcal{K}_{B}(s)g(s)\,ds\\
\left.\left.+\max\left\{\int_0^1k^+(t,s)g(s)\,ds,\int_0^1k^-(t,s)g(s)\,ds\right\} \right\} \right)<1.
\end{multline}{}
where
\begin{equation}\label{fsm}
f^{-\rho,{\rho}}:=\mathrm{ess } \sup \Bigl\{ \frac{f(t,u)}{ \rho}:\; (t,u)\in [0,1] \times[-\rho, \rho]\Bigr\}.
\end{equation}{}
\end{enumerate}
Then we have $i_{K}(T, K_{\rho})=1$.
\end{lem}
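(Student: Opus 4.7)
The plan is to verify condition (2) of the fixed-point-index lemma: I will show that $\mu u \neq Tu$ for every $u\in\partial K_{\rho}$ and every $\mu\geq 1$. Suppose, for contradiction, that such $u$ and $\mu$ exist. Applying the functionals $\alpha$ and $\beta$ to the equation $\mu u = Tu$ and using condition $(C_{5})$, I obtain the linear system
\begin{equation*}
\mathcal{N}_{\mu}\begin{pmatrix}\alpha[u]\\ \beta[u]\end{pmatrix}
=\begin{pmatrix}\int_{0}^{1}\mathcal{K}_{A}(s)g(s)f(s,u(s))\,ds\\[2pt]\int_{0}^{1}\mathcal{K}_{B}(s)g(s)f(s,u(s))\,ds\end{pmatrix},
\end{equation*}
where $\mathcal{N}_{\mu}$ is the matrix of Remark~\ref{rem1} with $a=\alpha[\gamma]$, $b=\alpha[\delta]$, $c=\beta[\gamma]$, $d=\beta[\delta]$. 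Assumptions $(C_{6})$--$(C_{8})$ imply that $\mathcal{N}$ satisfies the hypotheses of Lemma~\ref{lematrix2}, and the right hand side is non-negative by $(C_{3})$, $(C_{4})$ and $(C_{5})$.

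Next I would use Remark~\ref{rem1} (with $\mu\geq 1$) to bound $\alpha[u]$ and $\beta[u]$ componentwise by the corresponding values obtained via $\mathcal{N}^{-1}$. Writing out $\mathcal{N}^{-1}=\frac{1}{D}\begin{pmatrix}1-\beta[\delta] & \alpha[\delta]\\ \beta[\gamma] & 1-\alpha[\gamma]\end{pmatrix}$ explicitly gives
\begin{align*}
\alpha[u] & \leq \tfrac{1}{D}\Big[(1-\beta[\delta])\!\int_{0}^{1}\!\!\mathcal{K}_{A}(s)g(s)f(s,u(s))\,ds+\alpha[\delta]\!\int_{0}^{1}\!\!\mathcal{K}_{B}(s)g(s)f(s,u(s))\,ds\Big],\\
\beta[u] & \leq \tfrac{1}{D}\Big[\beta[\gamma]\!\int_{0}^{1}\!\!\mathcal{K}_{A}(s)g(s)f(s,u(s))\,ds+(1-\alpha[\gamma])\!\int_{0}^{1}\!\!\mathcal{K}_{B}(s)g(s)f(s,u(s))\,ds\Big].
\end{align*}

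Now I go back to the identity $\mu u(t)=\gamma(t)\alpha[u]+\delta(t)\beta[u]+\int_{0}^{1}k(t,s)g(s)f(s,u(s))\,ds$, take absolute values, and estimate the last term using the preceding Proposition (applied to $\omega(s)=k(t,s)g(s)f(s,u(s))$, noting $g,f\geq 0$) to replace $\left|\int_{0}^{1}k(t,s)g(s)f(s,u(s))\,ds\right|$ by $\max\{\int_{0}^{1}k^{+}(t,s)g(s)\,ds,\int_{0}^{1}k^{-}(t,s)g(s)\,ds\}\cdot \rho f^{-\rho,\rho}$. Substituting the bounds for $\alpha[u],\beta[u]$ and using $f(s,u(s))\leq\rho f^{-\rho,\rho}$ (valid because $\|u\|=\rho$), I then take the supremum over $t\in[0,1]$ to get
\begin{equation*}
\mu\rho=\mu\|u\|\leq \rho\, f^{-\rho,\rho}\cdot\sup_{t\in[0,1]}\{\cdots\},
\end{equation*}
where the expression in braces is exactly the one appearing in $(\mathrm{I}_{\rho}^{1})$. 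Dividing by $\rho$ and invoking $(\mathrm{I}_{\rho}^{1})$ yields $\mu<1$, contradicting $\mu\geq 1$.

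The main obstacle is the careful bookkeeping with the absolute values: since $k$ and the functions $\gamma,\delta$ may change sign, one must simultaneously (i) use Remark~\ref{rem1} to pass from $\mathcal{N}_{\mu}^{-1}$ to $\mathcal{N}^{-1}$, which requires the right-hand sides of the $2\times 2$ system to be non-negative, and (ii) split $k=k^{+}-k^{-}$ via the Proposition so that the positive integrand $gf$ can be uniformly estimated by $\rho f^{-\rho,\rho}$ inside the supremum. Aligning these two estimates with the precise form of the quantity in $(\mathrm{I}_{\rho}^{1})$ is the only delicate point; once the inequalities are written correctly, the contradiction with $\mu\geq 1$ is immediate.
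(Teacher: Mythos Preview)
Your proposal is correct and follows essentially the same route as the paper's own proof: assume $\mu u=Tu$ for some $u\in\partial K_\rho$ and $\mu\ge1$, apply $\alpha,\beta$ to obtain the $2\times2$ system, use Remark~\ref{rem1} to bound $\alpha[u],\beta[u]$ via $\mathcal{N}^{-1}$, then return to the pointwise identity, take absolute values (invoking the Proposition to handle the sign-changing kernel), estimate $f$ by $\rho f^{-\rho,\rho}$, and take the supremum to reach $\mu\rho<\rho$. The only cosmetic difference is that you make the use of the Proposition explicit, whereas the paper absorbs that step directly into the final displayed inequality.
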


\begin{proof}
We show that $Tu \neq \lambda u$ for all $\lambda \geq 1$ when $u \in \partial K_{\rho}$, which implies that 
$i_{K}(T,K_{\rho})=1$. In fact, if this does not happen, then there exist $u$ with
$\|u\|=\rho$ and $\lambda \geq 1$ such that $ \lambda u(t)=Tu(t)$, that is 
\begin{equation}\label{equau}
\lambda u(t)={\gamma}(t)\alpha[u]+{\delta(t)}{\beta}[u]+Fu(t).
\end{equation}
Therefore we obtain
$$
\lambda\alpha[u]=\alpha[\gamma]\alpha[u]+\alpha[\delta]{\beta}[u]+\alpha[Fu]
$$
and
$$
\lambda\beta[u]=\beta[\gamma]\alpha[u]+\beta[\delta]{\beta}[u]+\beta[Fu].
$$ 
Thus we have
\begin{equation}\label{soldlambda2}
\begin{pmatrix}
\lambda-\alpha[\gamma] & -\alpha[\delta]\\
-\beta[\gamma] & \lambda-\beta[\delta]
\end{pmatrix}\begin{pmatrix}
\alpha[u]\\
\beta[u]
\end{pmatrix}
=
\begin{pmatrix}
\alpha[Fu]\\
\beta[Fu]
\end{pmatrix}.
\end{equation}
Note that the matrix that occurs in \eqref{soldlambda2} satisfies the hypothesis of Lemma \ref{lematrix2}, so its inverse is order preserving.
Then, applying its inverse matrix to both sides of \eqref{soldlambda2}, we have
\begin{equation*}
\begin{pmatrix}
\alpha[u]\\
\beta[u]
\end{pmatrix}
=\frac{1}{D_{\lambda}}
\begin{pmatrix}
\lambda-\beta[\delta] & \alpha[\delta]\\
\beta[\gamma] & \lambda-\alpha[\gamma]
\end{pmatrix}
\begin{pmatrix}
\alpha[Fu]\\
\beta[Fu]
\end{pmatrix}.
\end{equation*}
By Remark~\ref{rem1}, we obtain that 
\begin{equation}\label{sold1}
\begin{pmatrix}
\alpha[u]\\
\beta[u]
\end{pmatrix}
\leq \frac{1}{D}
\begin{pmatrix}
1-\beta[\delta] & \alpha[\delta]\\
\beta[\gamma] & 1-\alpha[\gamma]
\end{pmatrix}
\begin{pmatrix}
\alpha[Fu]\\
\beta[Fu]
\end{pmatrix}.
\end{equation}

Hence, from \eqref{equau} and \eqref{sold1} we get
\begin{align*}
\lambda |u(t)|\leq &
 \dfrac{|\gamma(t)|}{D}((1-\beta[\delta])\alpha[Fu]+\alpha[\delta]\beta[Fu])\\
& +\dfrac{|\delta(t)|}{D}((1-\alpha[\gamma])\beta[Fu])+\beta[\gamma]\alpha[Fu])+|Fu(t)|. 
\end{align*}
Taking the supremum over $[0,1] $ gives
\begin{multline*}
\lambda {\rho} \leq {\rho} f^{-\rho,\rho}\, \left(\sup_{t \in  [0,1] }\left\{\left(\dfrac{|\gamma(t)|}{D}(1-\beta[\delta])+\dfrac{|\delta(t)|}{D}\beta[\gamma]\right)\int_0^1 \mathcal{K}_{A}(s)g(s)\,ds\right.\right.\\
+\left(\dfrac{|\gamma(t)|}{D}\alpha[\delta]+\dfrac{|\delta(t)|}{D}(1-\alpha[\gamma])\right) \int_0^1\mathcal{K}_{B}(s)g(s)\,ds\\\left.\left.+\max\left\{\int_0^1k^+(t,s)g(s)\,ds,\int_0^1k^-(t,s)g(s)\,ds\right\} \right\}\right).
\end{multline*}
From \eqref{eqind1b} we obtain that $\lambda \rho <\rho$, contradicting the fact that $\lambda\geq 1$.
\end{proof}
\begin{rem}
In similar way as in \cite{jwgi-lms} (where the positive case was studied) we point out that a stronger (but easier to check) condition than $(\mathrm{I}_{\protect\rho }^{1})$ is given by the following.
 \begin{multline*}
 f^{-\rho,\rho}\, \left[ \left(\dfrac{\|\gamma\|}{D}(1-\beta[\delta])+\dfrac{\|\delta\|}{D}\beta[\gamma]\right)\int_0^1 \mathcal{K}_{A}(s)g(s)\,ds\right.\\
\left.+\left(\dfrac{\|\gamma\|}{D}\alpha[\delta]+\dfrac{\|\delta\|}{D}(1-\alpha[\gamma])\right) \int_0^1\mathcal{K}_{B}(s)g(s)\,ds +\frac{1}{m} \right]<1.
\end{multline*}
where
\begin{equation*}
\frac{1}{m}:=\sup_{t\in [0,1]}\left\{\max\left\{\int_0^1k^+(t,s)g(s)\,ds,\int_0^1k^-(t,s)g(s)\,ds\right\}\right\}.
\end{equation*}

Note that, since $\max\{k^+,k^-\}\leq |k|$, the constant $m$ provides a better estimate on the growth of the nonlinearity $f$ than the constant 
$$
\sup_{t\in [0,1]}\int_0^1|k(t,s)|g(s)\,ds,
$$
used in~\cite{ac-gi-at-bvp, Cab1, Fan-Ma, dfgior1, dfgior2, giems, gi-pp1, gi-pp, gijwjmaa, gijwnodea, gijwems, nietopim}.
\end{rem}
\begin{rem}
If the functions $\gamma, \delta, k$ are non-negative, we can work within the cone $K\cap P$, regaining the condition given in~\cite{jwgi-lms}, namely 
\begin{multline*}
 f^{0,\rho}\, \Bigl(\sup_{t \in [0,1]}
\Bigl\{ \bigl(
\dfrac{\gamma(t)}{D}(1-\beta[\delta])
+\dfrac{\delta(t)}{D}\beta[\gamma]\bigr)
\int_0^1 \mathcal{K}_{A}(s)g(s)\,ds\\
+\bigl(\dfrac{\gamma(t)}{D}\alpha[\delta]
+\dfrac{\delta(t)}{D}(1-\alpha[\gamma])\bigr) \int_0^1
\mathcal{K}_{B}(s)g(s)\,ds +\int_0^1 k(t,s)g(s)\,ds \Bigr\} \Bigr)<1,
\end{multline*}{}
where 
$$
f^{0,{\rho}}:=\mathrm{ess }\sup \Bigl\{ \frac{f(t,u)}{ \rho}:\; (t,u)\in [0,1] \times[0, \rho]\Bigr\}.
$$
\end{rem}

\begin{lem}\label{thmind0}
 Assume that \par
\begin{enumerate}
\item[$(\mathrm{I}_{\protect\rho }^{0})$] \label{EqC} There exists $\rho>0$ such that
\begin{multline}\label{eqind0}
f_{\rho, \rho/c} \, \left(\inf_{t \in [a,b]} \left\{\left(\dfrac{\gamma(t)}{D}(1-\beta[\delta])+\dfrac{\delta(t)}{D}\beta[\gamma]\right)\int_a^b \mathcal{K}_{A}(s)g(s)\,ds\right.\right.\\
\left.\left.+\left(\dfrac{\gamma(t)}{D}\alpha[\delta]+\dfrac{\delta(t)}{D}(1-\alpha[\gamma])\right)\int_a^b\mathcal{K}_{B}(s)g(s)\,ds +\int_a^b k(t,s)g(s)\,ds\right\}\right)>1,
\end{multline}
where
\begin{equation}\label{flr}
f_{\rho ,{\rho /c}}: =\mathrm{ess }\inf \left\{\frac{f(t,u)}{\rho }%
:\;(t,u)\in [a,b]\times [\rho ,\rho /c]\right\}.
\end{equation}{}
\end{enumerate}
Then we have $i_{K}(T, V_{\rho})=0.$
\end{lem}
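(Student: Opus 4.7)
The plan is to apply part (1) of the fixed-point-index Lemma with a test function $e\in K\setminus\{0\}$ (for instance $e\equiv 1$, which lies in $K$ under the natural sign hypotheses on $A,B$): I would show that $u \neq Tu + \lambda e$ for every $u\in\partial V_\rho$ and every $\lambda>0$, which immediately yields $i_K(T,V_\rho)=0$. Suppose for contradiction that some $u\in\partial V_\rho$ and $\lambda>0$ satisfy
\begin{equation*}
u(t) = \gamma(t)\alpha[u] + \delta(t)\beta[u] + Fu(t) + \lambda e(t).
\end{equation*}
Since $u \in \partial V_\rho \subset K$, one has $\min_{[a,b]}u = \rho$ and $\|u\| \le \rho/c$, so $u(s) \in [\rho,\rho/c]$ for $s\in[a,b]$ and hence $f(s,u(s)) \ge \rho\, f_{\rho,\rho/c}$ a.e.\ on $[a,b]$.

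Applying the functionals $\alpha$ and $\beta$ to the identity above produces a linear $2\times 2$ system for $(\alpha[u],\beta[u])$ whose coefficient matrix is exactly $\mathcal{N}$ of Remark \ref{rem1}; its determinant equals $D>0$ by $(C_8)$, so Lemma \ref{lematrix2} gives that $\mathcal{N}^{-1}$ is order preserving. Inverting and then discarding the non-negative right-hand-side contributions $\lambda\alpha[e],\lambda\beta[e]$ (legitimate because $e\in K$) produces the componentwise lower bounds
\begin{equation*}
\alpha[u] \ge \tfrac{1}{D}\bigl((1-\beta[\delta])\alpha[Fu] + \alpha[\delta]\beta[Fu]\bigr),\quad \beta[u] \ge \tfrac{1}{D}\bigl(\beta[\gamma]\alpha[Fu] + (1-\alpha[\gamma])\beta[Fu]\bigr).
\end{equation*}
Unlike in the proof of Lemma \ref{thmind1}, no invocation of Remark \ref{rem1} is required here because the shift parameter is already $\mu=1$.

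For $t\in[a,b]$, $(C_6)$-$(C_7)$ ensure $\gamma(t),\delta(t)\ge 0$, while $(C_2)$ forces $k(t,s) \ge c_1\Phi(s) \ge 0$ for \emph{all} $s\in[0,1]$; consequently
\begin{equation*}
Fu(t) \ge \int_a^b k(t,s)g(s)f(s,u(s))\,ds\ge \rho\, f_{\rho,\rho/c}\int_a^b k(t,s)g(s)\,ds,
\end{equation*}
and by $(C_5)$ ($\mathcal{K}_A,\mathcal{K}_B\ge 0$) one similarly obtains $\alpha[Fu] \ge \rho\, f_{\rho,\rho/c}\int_a^b \mathcal{K}_A(s)g(s)\,ds$ and $\beta[Fu] \ge \rho\, f_{\rho,\rho/c}\int_a^b \mathcal{K}_B(s)g(s)\,ds$. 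Plugging these three estimates together with the earlier bounds on $\alpha[u],\beta[u]$ into the expression for $u(t)$ on $[a,b]$, and dropping the non-negative term $\lambda e(t)$, yields $u(t) \ge \rho\, f_{\rho,\rho/c}\cdot[\text{bracketed quantity in }(\mathrm{I}_\rho^0)]$; taking $\inf_{t\in[a,b]}$ and invoking $(\mathrm{I}_\rho^0)$ gives $\rho = \min_{[a,b]}u > \rho$, the desired contradiction.

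The delicate step is precisely the passage from $Fu$ on $[0,1]$ to an integral only over $[a,b]$: off $[a,b]$ the kernel may change sign, but $(C_2)$ guarantees that, when the \emph{evaluation point} $t$ lies in $[a,b]$, $k(t,\cdot)$ is non-negative everywhere, so the part of the integral over $[0,1]\setminus[a,b]$ that we discard is non-negative. The same sign mechanism, through $(C_5)$, validates the estimates on $\alpha[Fu]$ and $\beta[Fu]$; with these in hand the argument is simply the lower-bound mirror of Lemma \ref{thmind1}.
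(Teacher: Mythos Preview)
Your argument follows the paper's proof essentially line for line: assume $u=Tu+\lambda e$ on $\partial V_\rho$, apply $\alpha,\beta$, invert the matrix $\mathcal N$ (order-preserving by Lemma~\ref{lematrix2}), discard the non-negative contributions coming from $e$, restrict to $[a,b]$, and contradict $(\mathrm{I}_\rho^0)$. The structure is correct and matches the paper.

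There is, however, one genuine gap: the choice of the test element $e$. You take $e\equiv 1$ and say it lies in $K$ ``under the natural sign hypotheses on $A,B$''. But the standing hypotheses of Section~\ref{secham} deliberately allow \emph{signed} measures $dA,dB$; condition $(C_5)$ only requires $\mathcal K_A(s),\mathcal K_B(s)\ge 0$, not $\alpha[1]=\int_0^1 dA\ge 0$ or $\beta[1]=\int_0^1 dB\ge 0$. Hence $e\equiv 1$ need not belong to $K$, and then you cannot discard $\lambda\alpha[e],\lambda\beta[e]$ as non-negative. The paper avoids this by choosing
\[
e(t)=\int_0^1 k(t,s)\,ds,
\]
for which $\alpha[e]=\int_0^1 \mathcal K_A(s)\,ds\ge 0$ and $\beta[e]=\int_0^1 \mathcal K_B(s)\,ds\ge 0$ follow directly from $(C_5)$, while $(C_2)$ gives $\min_{[a,b]}e\ge c_1\|e\|$; thus $e\in K\setminus\{0\}$ under the stated assumptions. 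With this (or any other) $e\in K\setminus\{0\}$, the rest of your argument is exactly the paper's.
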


\begin{proof}
Let $e(t)=\int_0^1 k(t,s)\,ds$ for $t\in [0,1]$. Then, according to $(C_{2})$, $(C_{3})$ and $(C_{5})$, we have $e\in K \setminus \{0\}$. We show that $u \neq Tu +{\lambda}e$ for all ${\lambda} \geq 0$
 and $u \in \partial V_{\rho}$ which implies that $i_{K}(T, V_{\rho})=0$.
In fact, if this does not happen, there are $u \in \partial V_{\rho}$ (and so  for $t \in [a,b]$ we have $\min u(t)=\rho$ and $\rho
\leq u(t) \leq \rho/c$) , and ${\lambda} \geq 0$ with
\begin{equation*}
u(t)=Tu(t)+{\lambda}e(t)=\gamma(t)\alpha[u]+{\delta(t)}{\beta}[u]+Fu(t)+{\lambda}e(t).
\end{equation*}
Applying $\alpha$ and $\beta$ to both sides of the previous equation we get
\begin{equation}\label{matrix2}
\begin{pmatrix}
1-\alpha[\gamma] & -\alpha[\delta]\\
-\beta[\gamma] & 1-\beta[\delta]
\end{pmatrix}
\begin{pmatrix}
\alpha[u]\\
\beta[u]
\end{pmatrix}
=
\begin{pmatrix}
\alpha[Fu]+\lambda \alpha[e]\\
\beta[Fu]+ \lambda \beta[e]
\end{pmatrix}\ge\begin{pmatrix}
\alpha[Fu]\\
\beta[Fu]
\end{pmatrix}.
\end{equation}
Note that the matrix that occurs in \eqref{matrix2} satisfies the hypothesis of Lemma \ref{lematrix2}, so its inverse is order preserving.
Then, applying the inverse matrix to both sides of \eqref{matrix2}, we have
\begin{equation*}
\begin{pmatrix}
\alpha[u]\\
\beta[u]
\end{pmatrix}
\geq \frac{1}{D}
\begin{pmatrix}
1-\beta[\delta] & \alpha[\delta]\\
\beta[\gamma] & 1-\alpha[\gamma]
\end{pmatrix}
\begin{pmatrix}
\alpha[Fu]\\
\beta[Fu]
\end{pmatrix}.
\end{equation*}

Therefore, for $t\in[a,b]$, we obtain
\begin{align*}
  u(t)\geq & \left( \dfrac{\gamma(t)}{D}(1-\beta[\delta])+\dfrac{\delta(t)}{D}\beta[\gamma]\right)\alpha[Fu]+\left(\dfrac{\gamma(t)}{D}\alpha[\delta]+\dfrac{\delta(t)}{D}(1-\alpha[\gamma])\right)\beta[Fu]\\
& +Fu(t)+{\lambda}e (t)\\ \geq &
\left( \dfrac{\gamma(t)}{D}(1-\beta[\delta])
+\dfrac{\delta(t)}{D}\beta[\gamma]\right) \int_a^b \mathcal{K}_{A}(s)g(s)f(s,u(s))\,ds\\ & +
\left(\dfrac{\gamma(t)}{D}\alpha[\delta]+\dfrac{\delta(t)}{D}(1-\alpha[\gamma])\right)\int_a^b \mathcal{K}_{B}(s)g(s)f(s,u(s))\,ds\\ & +\int_a^b k(t,s)g(s)f(s,u(s))\,ds.
\end{align*}
Taking the infimum for $t \in [a,b]$ then gives
\begin{multline*}
\rho=\min u(t)  \geq  \rho f_{\rho, \rho/c} \, \left(\inf_{t\in[a,b]}\left\{\left(\dfrac{\gamma(t)}{D}(1-\beta[\delta])+\dfrac{\delta(t)}{D}\beta[\gamma]\right)\int_a^b \mathcal{K}_{A}(s)g(s)\,ds\right.\right.\\  \left.\left.+\left(\dfrac{\gamma(t)}{D}\alpha[\delta]+\dfrac{\delta(t)}{D}(1-\alpha[\gamma])\right)\int_a^b \mathcal{K}_{B}(s)g(s)\,ds +\int_a^b k(t,s)g(s)\,ds\right\}\right),
\end{multline*}
 contradicting \eqref{eqind0}.
\end{proof}
 
\begin{rem}\label{easyidx0}
We point out, in similar way as in \cite{jwgi-lms}, that a stronger (but easier to check) condition than $(\mathrm{I}_{\protect\rho }^{0})$ is given by the following.
\begin{multline}\label{eqind0b}
f_{\rho, \rho/c} \, \Bigl(\Bigl(\dfrac{ c_{2}\|\gamma\|}{D}(1-\beta[\delta])+\dfrac{c_{3}\|\delta\|}{D}\beta[\gamma]\Bigr) \int_a^b \mathcal{K}_{A}(s)g(s)\,ds\\ +\Bigl(\Bigl(\dfrac{ c_{2}\|\gamma\|}{D}\alpha[\delta]+\dfrac{c_{3}\|\delta\|}{D}(1-\alpha[\gamma])\Bigr)\int_a^b\mathcal{K}_{B}(s)g(s)\,ds +\frac{1}{M(a,b)} \Bigr)>1,
\end{multline}{}
where
\begin{equation*} 
\frac{1}{M(a,b)} :=\inf_{t\in [a,b]}\int_{a}^{b}k(t,s)g(s)\,ds.
\end{equation*}
\end{rem}

We now combine the results above in order to prove a Theorem regarding the existence of one, two or three nontrivial solutions. The proof is a direct consequence of the properties of the fixed point index and is omitted. It is possible to state a result for the existence of four or more solutions, we refer to Lan~\cite{kljdeds} for similar statements.

\begin{thm}\label{thmcasesS} 
The integral equation~\eqref{eqthamm} has at least one non-zero solution in $K$ if one of the following conditions holds.

\begin{enumerate}
\item[$(S_{1})$] There exist $\rho _{1},\rho _{2}\in (0,\infty )$ with $\rho_{1}/c<\rho _{2}$ such that $(\mathrm{I}_{\rho _{1}}^{0})$ and $(\mathrm{I}_{\rho _{2}}^{1})$ hold.
\item[$(S_{2})$] There exist $\rho _{1},\rho _{2}\in (0,\infty )$ with $\rho_{1}<\rho _{2}$ such that $(\mathrm{I}_{\rho _{1}}^{1})$ and $(\mathrm{I}_{\rho _{2}}^{0})$ hold.
\end{enumerate}
The integral equation~\eqref{eqthamm} has at least two non-zero solutions in $K$ if one of the following conditions holds.

\begin{enumerate}
\item[$(S_{3})$] There exist $\rho _{1},\rho _{2},\rho _{3}\in (0,\infty )$ with $\rho _{1}/c<\rho _{2}<\rho _{3}$ such that $(\mathrm{I}_{\rho_{1}}^{0})$,  $(\mathrm{I}_{\rho _{2}}^{1})$ $\text{and}\;\;(\mathrm{I}_{\rho _{3}}^{0})$ hold.
\item[$(S_{4})$] There exist $\rho _{1},\rho _{2},\rho _{3}\in (0,\infty )$
with $\rho _{1}<\rho _{2}$ and $\rho _{2}/c<\rho _{3}$ such that $(\mathrm{I}_{\rho _{1}}^{1}),\;\;(\mathrm{I}_{\rho _{2}}^{0})$ $\text{and}\;\;(\mathrm{I}_{\rho _{3}}^{1})$ hold.
\end{enumerate}
The integral equation~\eqref{eqthamm} has at least three non-zero solutions in $K$ if one
of the following conditions holds.

\begin{enumerate}
\item[$(S_{5})$] There exist $\rho _{1},\rho _{2},\rho _{3},\rho _{4}\in (0,\infty )$ with $\rho _{1}/c<\rho _{2}<\rho _{3}$ and $\rho _{3}/c<\rho_{4}$ such that $(\mathrm{I}_{\rho _{1}}^{0}),$ $(\mathrm{I}_{\rho _{2}}^{1}),\;\;(\mathrm{I}_{\rho _{3}}^{0})\;\;\text{and}\;\;(\mathrm{I}_{\rho _{4}}^{1})$ hold.
\item[$(S_{6})$] There exist $\rho _{1},\rho _{2},\rho _{3},\rho _{4}\in (0,\infty )$ with $\rho _{1}<\rho _{2}$ and $\rho _{2}/c<\rho _{3}<\rho _{4}$ such that $(\mathrm{I}_{\rho _{1}}^{1}),\;\;(\mathrm{I}_{\rho_{2}}^{0}),\;\;(\mathrm{I}_{\rho _{3}}^{1})$ $\text{and}\;\;(\mathrm{I}_{\rho _{4}}^{0})$ hold.
\end{enumerate}
\end{thm}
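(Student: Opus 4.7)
The plan is to feed the index values produced by Lemmas \ref{thmind1} and \ref{thmind0} into part (4) of the fixed point index lemma, using the chain of inclusions $K_{\rho}\subset V_{\rho}\subset K_{\rho/c}$ to guarantee the required containment hypothesis $\overline{\Omega^1}\subset\Omega_K$. Concretely, hypothesis $(\mathrm{I}_{\rho}^{1})$ yields $i_K(T,K_{\rho})=1$ and $(\mathrm{I}_{\rho}^{0})$ yields $i_K(T,V_{\rho})=0$, so the job is purely combinatorial: interleave the two kinds of regions so that in each pair one index is $0$ and the other is $1$, and so that the smaller set has its closure inside the larger one.

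For $(S_1)$, since $\rho_1/c<\rho_2$, any $u\in\overline{V_{\rho_1}}$ satisfies $\|u\|\le\rho_1/c<\rho_2$, hence $\overline{V_{\rho_1}}\subset K_{\rho_2}$; combining $i_K(T,V_{\rho_1})=0$ with $i_K(T,K_{\rho_2})=1$ and applying part (4) gives a fixed point in $K_{\rho_2}\setminus\overline{V_{\rho_1}}$, which is non-zero because $0\in V_{\rho_1}$. For $(S_2)$, since $\rho_1<\rho_2$, any $u\in\overline{K_{\rho_1}}$ has $\min_{t\in[a,b]}u(t)\le\|u\|\le\rho_1<\rho_2$, so $\overline{K_{\rho_1}}\subset V_{\rho_2}$, and part (4) produces a fixed point in $V_{\rho_2}\setminus\overline{K_{\rho_1}}$, again non-zero.

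For $(S_3)$ and $(S_4)$, one simply iterates: under $(S_3)$ the argument of $(S_1)$ applied to $(\rho_1,\rho_2)$ produces a fixed point in $K_{\rho_2}\setminus\overline{V_{\rho_1}}$, while the argument of $(S_2)$ applied to $(\rho_2,\rho_3)$ (using $\rho_2<\rho_3$) produces a fixed point in $V_{\rho_3}\setminus\overline{K_{\rho_2}}$; these two sets are disjoint, so we obtain two distinct non-zero fixed points. The $(S_4)$ case is symmetric, chaining $(S_2)$ with $(S_1)$ by means of $\rho_2/c<\rho_3$. Similarly, $(S_5)$ and $(S_6)$ are handled by three successive applications of part (4), where at each step the chosen inclusion ($\rho_i/c<\rho_{i+1}$ or $\rho_i<\rho_{i+1}$, according to the type of index condition being joined) is exactly the one needed to verify $\overline{\Omega^1}\subset\Omega_K$.

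The argument is almost entirely bookkeeping; the only point to watch is the correct direction of the inclusion when joining a $V$-type set to a $K$-type set (requiring $\rho_i/c<\rho_{i+1}$) versus a $K$-type set to a $V$-type set (requiring only $\rho_i<\rho_{i+1}$). There is no analytic obstacle, since Lemmas \ref{thmind1} and \ref{thmind0} already encode all the work and the fixed point index framework supplies disjointness and non-triviality of the produced solutions for free.
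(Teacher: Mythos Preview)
Your argument is correct and is exactly the standard one the authors have in mind: the paper itself omits the proof, stating only that it ``is a direct consequence of the properties of the fixed point index.'' Your bookkeeping with the inclusions $\overline{V_{\rho_i}}\subset K_{\rho_{i+1}}$ (when $\rho_i/c<\rho_{i+1}$) and $\overline{K_{\rho_i}}\subset V_{\rho_{i+1}}$ (when $\rho_i<\rho_{i+1}$), combined with Lemmas~\ref{thmind1} and~\ref{thmind0} and part~(4) of the index lemma, is precisely what is needed.
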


\section{Some non-existence results}\label{secnon}
We now consider the auxiliary Hammerstein integral equation
\begin{equation}\label{ieS}
u(t)= \int_0^1k_S(t,s)g(s)f(s,u(s))ds:=Su(t),
\end{equation}
where the kernel $k_S$ is given by the formula
\begin{multline*}
k_{S}(t,s)= \dfrac{\gamma(t)}{D}\left[(1-\beta[\delta])\mathcal{K}_{A}(s)+\alpha[\delta]\mathcal{K}_{B}(s)\right]\\
 +\dfrac{\delta(t)}{D}\left[\beta[\gamma]\mathcal{K}_{A}(s)+(1-\alpha[\gamma])\mathcal{K}_{B}(s)\right] +k(t,s).
\end{multline*}
The operator $S$ shares a number of useful properties with $T$, firstly the cone invariance and compactness, the proof follows directly from $(C_1)$-$(C_8)$ and is omitted. 
\begin{lem}
The operator \eqref{ieS} maps $K$ into $K$ and is compact.
\end{lem}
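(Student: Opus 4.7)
The strategy is to mirror the cone-invariance proof already carried out for $T$, after observing that $Su$ admits a decomposition structurally identical to $Tu$. First I would rewrite
\begin{equation*}
Su(t) = \Gamma(t)\,\alpha[Fu] + \Delta(t)\,\beta[Fu] + Fu(t),
\end{equation*}
where
\begin{equation*}
\Gamma(t) := \frac{(1-\beta[\delta])\gamma(t) + \beta[\gamma]\delta(t)}{D}, \qquad \Delta(t) := \frac{\alpha[\delta]\gamma(t) + (1-\alpha[\gamma])\delta(t)}{D}.
\end{equation*}
Under $(C_6)$--$(C_8)$ the scalar coefficients multiplying $\gamma$ and $\delta$ in the numerators are all non-negative, and under $(C_3)$--$(C_5)$ the quantities $\alpha[Fu] = \int_0^1 \mathcal{K}_A(s)g(s)f(s,u(s))\,ds$ and $\beta[Fu]$ are non-negative. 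Thus the pieces $\Gamma(t)\alpha[Fu]$ and $\Delta(t)\beta[Fu]$ play for $S$ exactly the roles that $\gamma(t)\alpha[u]$ and $\delta(t)\beta[u]$ play for $T$.

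The norm and localisation estimates then follow verbatim from the earlier argument for $T$. For $u\in K$ with $\|u\|\le r$, using $(C_2)$ I would bound
\begin{equation*}
|Su(t)| \le |\Gamma(t)|\,\alpha[Fu] + |\Delta(t)|\,\beta[Fu] + \int_0^1 \Phi(s)g(s)f(s,u(s))\,ds
\end{equation*}
and take the supremum in $t$. For $t\in[a,b]$, the pointwise inequalities $\gamma(t)\ge c_2\|\gamma\|$, $\delta(t)\ge c_3\|\delta\|$ and $k(t,s)\ge c_1\Phi(s)$ together with the non-negativity of the scalar coefficients yield the matching lower bound $\min_{t\in[a,b]}Su(t)\ge c\|Su\|$, where $c=\min\{c_1,c_2,c_3\}$.

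Next I would verify $\alpha[Su],\beta[Su]\ge 0$. Applying $\alpha$ to the decomposition gives $\alpha[Su] = \alpha[\Gamma]\alpha[Fu] + \alpha[\Delta]\beta[Fu] + \alpha[Fu]$, and using the identity $D=(1-\alpha[\gamma])(1-\beta[\delta])-\alpha[\delta]\beta[\gamma]$ this collapses to
\begin{equation*}
\alpha[Su] = \frac{(1-\beta[\delta])\alpha[Fu] + \alpha[\delta]\beta[Fu]}{D},
\end{equation*}
with the symmetric computation giving $\beta[Su]=\bigl(\beta[\gamma]\alpha[Fu] + (1-\alpha[\gamma])\beta[Fu]\bigr)/D$. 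Both are manifestly non-negative under $(C_6)$--$(C_8)$, so $Su\in K$.

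Finally, compactness of $S$ follows by noting that $S=F + \Gamma\,\alpha[F(\cdot)] + \Delta\,\beta[F(\cdot)]$: the Hammerstein operator $F$ is compact by a standard argument (as already invoked for $T$), while the two perturbation terms have rank at most one in $C[0,1]$ and hence map bounded sets into relatively compact sets. The only mildly delicate step is the algebraic collapse in checking $\alpha[Su],\beta[Su]\ge 0$, but this is a routine computation relying on the explicit form of $D$; no genuine obstacle is anticipated, which is why the authors leave the proof to the reader.
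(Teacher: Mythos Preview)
Your argument is correct. The paper omits this proof entirely, simply stating that it ``follows directly from $(C_1)$--$(C_8)$''; your decomposition $Su(t)=\Gamma(t)\alpha[Fu]+\Delta(t)\beta[Fu]+Fu(t)$ and subsequent reuse of the cone-invariance argument for $T$ is exactly the sort of verification the authors had in mind, and all steps (including the algebraic collapse giving $\alpha[Su]=D^{-1}\bigl((1-\beta[\delta])\alpha[Fu]+\alpha[\delta]\beta[Fu]\bigr)\ge 0$) check out.

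One minor remark: the phrase ``rank at most one'' is slightly inaccurate since the perturbation terms $u\mapsto\Gamma\,\alpha[Fu]$ are nonlinear; what you mean (and what suffices) is that they map bounded sets into bounded subsets of one-dimensional subspaces of $C[0,1]$, hence into relatively compact sets. An alternative route, which the paper later uses in the proof of Theorem~\ref{lcomp} for the related operator $L$, is to work directly with the kernel $k_S$ and show $|k_S(t,s)|\le\Psi(s)$ and $k_S(t,s)\ge c\Psi(s)$ on $[a,b]$ for a suitable $\Psi$; this avoids the decomposition but requires the same algebraic identities when verifying $\alpha[Su]\ge 0$ without assuming positive measures.
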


A key property that is also useful is the one given by the following Theorem; the proof is similar to 
the one in~\cite[Lemma 2.8 and Therem 2.9]{jwgi-lms} and is omitted.
\begin{lem}\label{sandt}
The operators $S$ and $T$ have the same fixed points in $K$. Furthermore
if $u\ne Tu$ for $u\in\partial D_K$, then $i_K(T,D_K)=i_K(S,D_K)$.
\end{lem}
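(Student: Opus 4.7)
The plan is to split the statement into the fixed-point equivalence and the index equality, exploiting the fact that $T$ is a finite-rank linear perturbation of $F$. Set $Lu := \gamma\alpha[u] + \delta\beta[u]$, so $Tu = Lu + Fu$. Using the Fubini identity $\int_0^1 \mathcal{K}_A(s) g(s) f(s,u(s))\,ds = \alpha[Fu]$ (and analogously for $B$), the definition of $k_S$ rewrites as $Su(t) = \gamma(t)x_1 + \delta(t)y_1 + Fu(t)$ with
$$
x_1 = \tfrac{1}{D}\bigl[(1-\beta[\delta])\alpha[Fu] + \alpha[\delta]\beta[Fu]\bigr], \quad y_1 = \tfrac{1}{D}\bigl[\beta[\gamma]\alpha[Fu] + (1-\alpha[\gamma])\beta[Fu]\bigr].
$$
A short calculation (precisely the matrix inversion of the $2\times 2$ system already performed in the proof of Lemma~\ref{thmind1}) gives $\alpha[Su] = x_1$ and $\beta[Su] = y_1$, so $Su = L(Su) + Fu$, i.e.\ $(I-L)(Su) = Fu$. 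Since the action of $I - L$ on $\mathrm{span}\{\gamma,\delta\}$ is governed by the matrix of Lemma~\ref{lematrix2} with determinant $D > 0$, the operator $I - L$ is invertible on $C[0,1]$ and $S = (I-L)^{-1}F$. Therefore $u = Tu \Leftrightarrow (I-L)u = Fu \Leftrightarrow u = Su$, which proves the first claim.

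For the index equality I would use the convex homotopy $H(\lambda, u) := (1-\lambda)Tu + \lambda Su$, $\lambda \in [0,1]$. Convexity of $K$ and compactness of both $T$ and $S$ make $H$ a compact map from $[0,1]\times\overline{D_K}$ into $K$, so by homotopy invariance it suffices to rule out fixed points of $H(\lambda,\cdot)$ on $\partial D_K$. The key identity is
$$
Tu - Su = (Lu + Fu) - (L(Su) + Fu) = L(u - Su),
$$
from which a fixed point $u = H(\lambda, u)$ satisfies $u - Su = (1-\lambda)(Tu - Su) = (1-\lambda)L(u - Su)$, that is, $\bigl(I - (1-\lambda)L\bigr)(u - Su) = 0$.

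It remains to verify that $I - \mu L$ is injective for every $\mu \in [0,1]$. Because $\mathrm{Im}(L) \subseteq \mathrm{span}\{\gamma,\delta\}$, any solution of $(I-\mu L)v = 0$ lies in that span and reduces the question to the positivity of
$$
D_\mu^* := (1-\mu\alpha[\gamma])(1-\mu\beta[\delta]) - \mu^2\alpha[\delta]\beta[\gamma].
$$
For $\mu \in [0,1]$ each of $(1-\mu\alpha[\gamma])$ and $(1-\mu\beta[\delta])$ is positive by $(C_6)$--$(C_7)$ and non-increasing in $\mu$, so their product dominates its value at $\mu = 1$, namely $(1-\alpha[\gamma])(1-\beta[\delta])$; meanwhile $\mu^2\alpha[\delta]\beta[\gamma] \le \alpha[\delta]\beta[\gamma]$. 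Consequently $D_\mu^* \ge D > 0$ by $(C_8)$. Injectivity of $I-\mu L$ then forces $u = Su$, and by the first part $u = Tu$, contradicting the standing assumption $u \neq Tu$ on $\partial D_K$. Homotopy invariance finally yields $i_K(T, D_K) = i_K(H(0,\cdot), D_K) = i_K(H(1,\cdot), D_K) = i_K(S, D_K)$. The only mildly delicate step is the monotone comparison establishing $D_\mu^* \ge D$; once that is in hand, the argument is a textbook convex-homotopy application.
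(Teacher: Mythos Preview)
Your argument is correct. The paper itself omits the proof of this lemma entirely, stating only that it ``is similar to the one in [Lemma 2.8 and Theorem 2.9]{jwgi-lms} and is omitted.'' Your write-up supplies the details that the authors leave to the reference: the identification $Su=(I-L)^{-1}Fu$ via the $2\times 2$ matrix inversion gives the fixed-point equivalence, and the convex homotopy together with the injectivity of $I-\mu L$ for $\mu\in[0,1]$ (from $D_\mu^*\ge D>0$) gives the index equality. This is exactly the mechanism underlying the Webb--Infante argument being cited, so your proof is in line with the intended one; there is simply nothing further in the present paper to compare against.

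One small remark: when you check injectivity of $I-\mu L$, you do not need to assume $\gamma$ and $\delta$ are linearly independent. From $v=\mu Lv$ you apply $\alpha$ and $\beta$ directly to obtain the $2\times 2$ homogeneous system for $(\alpha[v],\beta[v])$; the positivity of $D_\mu^*$ forces $\alpha[v]=\beta[v]=0$, whence $v=\mu Lv=0$. Your argument already does this in effect, but it is worth saying explicitly so that the reduction to $\mathrm{span}\{\gamma,\delta\}$ is not misread as requiring independence.
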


We define the constants
\begin{equation*}
\frac{1}{m_S}:=\sup_{t\in [0,1]}\left\{\max\left\{\int_0^1k_S^+(t,s)g(s)\,ds,\int_0^1k_S^-(t,s)g(s)\,ds\right\}\right\},
\end{equation*}
\begin{equation*} 
\frac{1}{M_S(a,b)}=\frac{1}{M_S} :=\inf_{t\in [a,b]}\int_{a}^{b}k_S(t,s)g(s)\,ds,
\end{equation*}
and we prove the following non-existence results.
\begin{thm}\label{noext}
 Assume that one of the following conditions holds: \par
$(1)$  $f(t,u)<m_S|u|$ for every $t\in [0,1]$ and $u\in\bR\backslash\{0\}$,\par
$(2)$  $f(t,u)>M_Su$ for every $t\in [a,b]$ and $u\in\bR^+$.\\
 Then the equations \eqref{eqthamm} and \eqref{ieS} have no non-trivial solution in $K$. 
\end{thm}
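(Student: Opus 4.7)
The plan is to assume for contradiction a nontrivial $u\in K$ solving $u=Su$ (by Lemma~\ref{sandt}, this is equivalent to $u=Tu$ and more convenient, since $m_S,M_S$ are phrased directly in terms of $k_S$) and, in each case, to derive $\|u\|<\|u\|$ or $\rho<\rho$, where $\rho=\min_{[a,b]}u$.

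For case (1), I would pick $t_0\in[0,1]$ with $|u(t_0)|=\|u\|>0$ and apply the preceding proposition to $\omega(s)=k_S(t_0,s)g(s)f(s,u(s))$. Since $g,f\ge 0$, one has $\omega^\pm(s)=k_S^\pm(t_0,s)g(s)f(s,u(s))$, giving
$$\|u\|\le\max\left\{\int_0^1 k_S^+(t_0,s)g(s)f(s,u(s))\,ds,\,\int_0^1 k_S^-(t_0,s)g(s)f(s,u(s))\,ds\right\}.$$
The key preliminary observation is that continuity of $f(t,\cdot)$, the sign condition $f\ge 0$, and the hypothesis $f(t,u)<m_S|u|$ for $u\ne 0$ together force $f(t,0)=0$, so $f(s,u(s))>0$ implies $u(s)\ne 0$. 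Hence, on the positive-measure set where the dominating integrand is positive, the strict pointwise inequality $f(s,u(s))<m_S|u(s)|\le m_S\|u\|$ is active, and
$$\|u\|<m_S\|u\|\max\left\{\int_0^1 k_S^+(t_0,s)g(s)\,ds,\,\int_0^1 k_S^-(t_0,s)g(s)\,ds\right\}\le m_S\|u\|\cdot\frac{1}{m_S}=\|u\|,$$
a contradiction (the degenerate case where both integrals vanish yields $\|u\|\le 0$ directly).

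For case (2), $u\in K$ gives $u\ge c\|u\|>0$ on $[a,b]$, so $\rho:=\min_{[a,b]}u>0$; choose $t_0\in[a,b]$ attaining this minimum. The step I would isolate is that $k_S(t,s)\ge 0$ for $t\in[a,b]$ and a.e.\ $s\in[0,1]$: by $(C_6)$--$(C_7)$, $\gamma(t),\delta(t)\ge 0$ on $[a,b]$; by $(C_2)$, $k(t,s)\ge c_1\Phi(s)\ge 0$ there; and by $(C_5)$, $\mathcal{K}_A,\mathcal{K}_B\ge 0$. This non-negativity lets me discard the integral over $[0,1]\setminus[a,b]$, and the strict hypothesis $f(s,u(s))>M_S u(s)\ge M_S\rho$ on $[a,b]$ then yields
$$\rho=u(t_0)\ge\int_a^b k_S(t_0,s)g(s)f(s,u(s))\,ds>M_S\rho\int_a^b k_S(t_0,s)g(s)\,ds\ge M_S\rho\cdot\frac{1}{M_S}=\rho,$$
the strict middle inequality being supported by $\int_a^b k_S(t_0,s)g(s)\,ds\ge 1/M_S>0$.

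The two anticipated obstacles are: first, promoting the pointwise strict bound $f<m_S|u|$ (valid only on $\{u\ne 0\}$) to a strict integral inequality in case (1), which pivots on verifying $f(t,0)=0$; and second, establishing $k_S\ge 0$ on $[a,b]\times[0,1]$ in case (2) so that the integral over the complement of $[a,b]$ can be dropped without loss.
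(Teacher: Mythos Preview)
Your proposal is correct and follows essentially the same route as the paper's proof: the same choice of $t_0$ maximizing $|u|$ in case~(1) together with the $\max\{\int\omega^+,\int\omega^-\}$ estimate, and the same restriction to $[a,b]$ using nonnegativity of $k_S$ there in case~(2). Your treatment is in fact slightly more careful than the paper's, which passes from the pointwise strict inequality to a strict integral inequality without comment; your observation that $f(t,0)=0$ (forced by continuity, $f\ge 0$, and the hypothesis) and your handling of the degenerate zero-integral case make the strict step in case~(1) fully rigorous.
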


\begin{proof}
In view of Lemma~\ref{sandt} we prove the Theorem using the operator $S$.\par
$(1)$ Assume, on the contrary, that there exists $u\in K$, $u\not\equiv 0$ such that $u=Su$ and let $t_0\in [0,1]$ such that $\|u\|=|u(t_0)|$. Then we have
\begin{align*}
\|u\|= & |u(t_0)|=\left|\int_0^1k_S(t_0,s)g(s)f(s,u(s))ds\right|\\ \le & \max\left\{\int_0^1k_S^+(t_0,s)g(s)f(s,u(s))\,ds,\int_0^1k_S^-(t_0,s)g(s)f(s,u(s))\,ds\right\}\\
< & \max\left\{\int_0^1k_S^+(t_0,s)g(s)m_S|u(s)|\,ds,\int_0^1k_S^-(t_0,s)g(s)m_S|u(s)|\,ds\right\}\\
\le & \max\left\{\int_0^1k_S^+(t_0,s)g(s)\,ds,\int_0^1k_S^-(t_0,s)g(s)\,ds\right\}m_S\|u\|\le\|u\|,
\end{align*}
a contradiction.\par
$(2)$ Assume, on the contrary, that there exists $u\in K$, $u\not\equiv 0$ such that $u=Su$ and let $\eta\in [a,b]$ be such that $u(\eta)=\min_{t\in[a,b]}u(t)$. For $t\in [a,b]$ we have
\begin{align*}
u(t)= & \int_0^1 k_S(t,s)g(s)f(s,u(s))ds\geq \int_{a}^{b} k_S(t,s)g(s)f(s,u(s))ds\\ 
> & M_S \int_{a}^{b} k_S(t,s)g(s)u(s)ds.
\end{align*}
Taking the infimum  for $t\in [a,b]$, we have
$$
\min_{t\in[a,b]}u(t)> M_S \inf_{t\in[a,b]}\int_{a}^{b} k_S(t,s)g(s)u(s)\,ds.
$$
Thus we obtain
$$
u(\eta)>M_S u(\eta) \inf_{t\in[a,b]}\int_{a}^{b} k_S(t,s)g(s)\,ds =u(\eta),
$$
a contradiction.
\end{proof}

\section{Eigenvalue criteria for the existence of nontrivial solutions}\label{seceigen}
In this Section we assume the additional hypothesis that the functionals $\alpha$ and $\beta$ are given by \emph{positive measures}. 

In order to state our eigenvalue comparison results, we consider the following operators on $C[0,1]$.
$$
L\,u(t):=  \int_0^1| k_{S}(t,s)|g(s)u(s)\,ds,\quad \tilde L\,u(t):=  \int_a^b k^+_{S}(t,s)g(s)u(s)\,ds.
$$
By similar proofs of \cite[Lemma 2.6 and Theorem 2.7]{jwgi-lms}, we study the properties of those operators.

\begin{thm}\label{lcomp} 
The operators $L$ and $\tilde L$ are compact and map $P$ into $P\cap K$.
\end{thm}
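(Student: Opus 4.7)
The plan is to follow the template of the analogous results in \cite{jwgi-lms}, adapting the estimates to the signed kernel $k_S$ by replacing $k_S$ by $|k_S|$ or $k_S^+$ respectively. The heart of the argument is to produce a bounding function $\Psi\in L^\infty[0,1]$ and a constant $c\in(0,1]$ that play, for the kernel $k_S$, the same role that $\Phi$ and $c_1$ play for $k$ in hypothesis $(C_2)$.

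First I would set
\[
\Psi(s):=\frac{\|\gamma\|}{D}\bigl[(1-\beta[\delta])\mathcal{K}_A(s)+\alpha[\delta]\mathcal{K}_B(s)\bigr]+\frac{\|\delta\|}{D}\bigl[\beta[\gamma]\mathcal{K}_A(s)+(1-\alpha[\gamma])\mathcal{K}_B(s)\bigr]+\Phi(s),
\]
and check, using $(C_2)$, $(C_5)$, $(C_6)$, $(C_7)$ together with $(C_8)$ (which gives $1-\alpha[\gamma],1-\beta[\delta]>0$), that $|k_S(t,s)|\le\Psi(s)$ for every $t\in[0,1]$ and a.e.\ $s\in[0,1]$. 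On the subinterval $[a,b]$ every summand defining $k_S$ is non-negative, and the lower bounds $\gamma(t)\ge c_2\|\gamma\|$, $\delta(t)\ge c_3\|\delta\|$, $k(t,s)\ge c_1\Phi(s)$ combine to yield $k_S(t,s)\ge c\,\Psi(s)$ for $t\in[a,b]$, with $c=\min\{c_1,c_2,c_3\}$. In particular $k_S^+(t,s)=k_S(t,s)\ge c\Psi(s)$ on $[a,b]\times[0,1]$, while $k_S^+\le|k_S|\le\Psi$ pointwise everywhere.

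With these estimates in hand, cone invariance becomes immediate. For $u\in P$ both $Lu(t)$ and $\tilde{L}u(t)$ are non-negative, as their integrands are pointwise non-negative. The bounds above yield
\[
\|Lu\|\le\int_0^1\Psi(s)g(s)u(s)\,ds,\qquad\min_{t\in[a,b]}Lu(t)\ge c\int_0^1\Psi(s)g(s)u(s)\,ds\ge c\|Lu\|,
\]
so $Lu\in K_0$; the analogous chain (now with the integral running over $[a,b]$ and $k_S^+$ replacing $|k_S|$) shows $\tilde Lu\in K_0$. Since in Section~\ref{seceigen} the measures $dA,dB$ are assumed positive, the inequality $Lu\ge 0$ forces $\alpha[Lu],\beta[Lu]\ge 0$, hence $Lu\in P\cap K$; the same reasoning works for $\tilde L$.

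Compactness is then standard. The kernels $|k_S(\cdot,s)|$ and $k_S^+(\cdot,s)$ inherit from $k_S(\cdot,s)$ (and ultimately from $k$ via $(C_1)$) the property of continuity for a.e.\ $s$, being continuous functions of $k_S(\cdot,s)$. Since both kernels are dominated by $g\Psi\in L^1[0,1]$ (by $(C_3)$ applied to each summand of $\Psi$), dominated convergence gives the continuity of $Lu$ and $\tilde Lu$ and, applied to a norm-bounded family in $C[0,1]$, the equicontinuity required by Arzel\`a--Ascoli. The main delicate point of the whole argument is the verification of the pointwise pair $|k_S|\le\Psi$ and $k_S\ge c\Psi$ on $[a,b]\times[0,1]$; once these are in place the remainder is a direct transcription of the proofs of Lemma 2.6 and Theorem 2.7 in \cite{jwgi-lms}.
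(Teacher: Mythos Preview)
Your proposal is correct and follows essentially the same route as the paper: you build the same bounding function $\Psi$, establish the same pair of estimates $|k_S|\le\Psi$ and $k_S\ge c\Psi$ on $[a,b]\times[0,1]$, and deduce cone invariance and compactness exactly as the authors do (in fact you give slightly more detail on the Arzel\`a--Ascoli step and on $\tilde L$ than the paper does). The only cosmetic remark is that the positivity of $1-\alpha[\gamma]$ and $1-\beta[\delta]$ already follows from $(C_6)$--$(C_7)$ rather than $(C_8)$, but this does not affect the argument.
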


\begin{proof}
 Note that the operators $L$ and $\tilde L$ map $P$ into $P$ (because they have a positive integral kernel) and are compact. We now show that they map $P$ into $P\cap K$. We do this for the operator $L$, a similar proof works for $\tilde L$.
 
Firstly we observe that 
\begin{align*}
|k_S(t,s)|\le & \dfrac{|\gamma(t)|}{D}((1-\beta[\delta])\mathcal{K}_{A}(s)+\alpha[\delta]\mathcal{K}_{B}(s))\\
 & +\dfrac{|\delta(t)|}{D}(\beta[\gamma]\mathcal{K}_{A}(s)+(1-\alpha[\gamma])\mathcal{K}_{B}(s)) +|k(t,s)|\\
 \le & \dfrac{\|\gamma\|}{D}((1-\beta[\delta])\mathcal{K}_{A}(s)+\alpha[\delta]\mathcal{K}_{B}(s))
 +\dfrac{\|\delta\|}{D}(\beta[\gamma]\mathcal{K}_{A}(s)+(1-\alpha[\gamma])\mathcal{K}_{B}(s))\\
 & +|k(t,s)|\\
 \le & \Upsilon(s)+\Phi(s)=:\Psi(s),
 \end{align*}
 where
$$\Upsilon(s)=\dfrac{\|\gamma\|}{D}\left((1-\beta[\delta])\mathcal{K}_{A}(s)+\alpha[\delta]\mathcal{K}_{B}(s)\right)
  +\dfrac{\|\delta\|}{D}\left(\beta[\gamma]\mathcal{K}_{A}(s)+(1-\alpha[\gamma])\mathcal{K}_{B}(s)\right).
$$
Moreover, we have, for $t\in [a,b],$
 \begin{align*}
 |k_S(t,s)|=k_S(t,s)\ge & \dfrac{c_2\|\gamma\|}{D}\left[(1-\beta[\delta])\mathcal{K}_{A}(s)+\alpha[\delta]\mathcal{K}_{B}(s)\right]\\ & +\dfrac{c_3\|\delta\|}{D}\left[\beta[\gamma]\mathcal{K}_{A}(s)+(1-\alpha[\gamma])\mathcal{K}_{B}(s)\right]+c_1\Phi(t) \ge c\Psi(s),
 \end{align*}
 and thus
\begin{equation}\label{cofks}
\min_{t\in [ a,b]}k_S(t,s) \geq c\Psi(s).
\end{equation}
Also we have $g\,\Psi \in L^1[0,1]$ and we obtain that, for $u\in P$ and $t\in [0,1]$,
\begin{equation*}
Lu(t)\leq\int_{0}^{1}\Psi(s)g(s)u(s)\,ds,
\end{equation*}
in such a way that, taking the supremum on $t\in [0,1]$, we get
\begin{equation*}
\Vert Lu\Vert \leq \int_{0}^{1}\Psi(s)g(s)u(s)\,ds.
\end{equation*}%
On the other hand, 
$$
\min_{t\in [a,b]}Lu(t) \geq c\int_{0}^{1}\Psi(s)g(s)u(s)\,ds 
\geq c \Vert Lu\Vert .
$$
Furthermore, since $\alpha$ and $\beta$ are given by positive measures,
$$
\alpha [Lu]=\int_0^1\int_{0}^{1}| k_S(t,s)|g(s)u(s)\,ds\,dA(t) \geq 0
$$
and
$$ \beta[Lu]=\int_0^1 \int_{0}^{1}| k_S(t,s)|g(s)u(s)\,ds\,dB(t) \geq 0.
$$
Hence we have $Lu\in K$. 
\end{proof}

We recall that $\lambda$ is an \textit{eigenvalue} of a linear operator $\Gamma$ with corresponding eigenfunction $\varphi$ if $\varphi \neq 0$ and ${\lambda}\varphi=\Gamma \varphi$. The reciprocals of nonzero
eigenvalues are called \emph{characteristic values} of $\Gamma$. 
We will denote the \textit{spectral radius} of $\Gamma$ by $r(\Gamma):=\lim_{n\to\infty}\|\Gamma^n\|^{\frac{1}{n}}$ and its \textit{principal characteristic value} (the reciprocal of the spectral radius) by $\mu(\Gamma):=1/r(\Gamma)$.\par

The following Theorem is analogous to the ones in \cite{jwgi-lms, jwkleig} and is proven by using the facts that the considered operators leave $P$ invariant, that $P$ is reproducing, combined with the well-known Krein-Rutman Theorem. The condition $(C_3)$ is used to show that $r(L)>0$.
\begin{thm}\label{specrad} The spectral radius of $L$ is non-zero and is an eigenvalue of $L$ with an eigenfunction in $P$. A similar result holds for $ \tilde L$.
\end{thm}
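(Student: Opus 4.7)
The plan is to invoke the Krein--Rutman theorem for the compact positive operator $L$, with condition $(C_3)$ forcing $r(L)>0$. Theorem~\ref{lcomp} already supplies the structural ingredients: $L$ is compact and $L(P)\subseteq P$, since the integral kernel $|k_S|$ is non-negative. Moreover, $P$ is a total (reproducing) cone in $C[0,1]$, because every $u\in C[0,1]$ splits as $u=u^+-u^-$ with $u^\pm\in P$. Consequently, once we verify $r(L)>0$, the classical Krein--Rutman theorem yields a $\varphi\in P\setminus\{0\}$ with $L\varphi=r(L)\varphi$, which is exactly what the statement asserts.

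The main, and essentially only, obstacle is showing $r(L)>0$. I would test $L$ on the constant function $u_0\equiv 1\in P$. The crucial pointwise estimate is that, for every $t\in[a,b]$ and almost every $s\in[0,1]$,
\[
|k_S(t,s)|=k_S(t,s)\geq k(t,s)\geq c_1\Phi(s),
\]
since by $(C_5)$--$(C_7)$ the two summands of $k_S$ involving $\gamma(t)/D$ and $\delta(t)/D$ are non-negative whenever $t\in[a,b]$. Setting
\[
\mu:=c_1\int_a^b\Phi(s)g(s)\,ds,
\]
condition $(C_3)$ guarantees $\mu>0$. Letting $m_n:=\min_{t\in[a,b]}L^nu_0(t)$, the previous bound gives, for $t\in[a,b]$,
\[
L^{n+1}u_0(t)\geq \int_a^b k_S(t,s)g(s)L^nu_0(s)\,ds\geq m_n\, c_1\int_a^b\Phi(s)g(s)\,ds=\mu\, m_n,
\]
so an induction starting from $m_0=1$ yields $m_n\geq\mu^n$. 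Hence $\|L^n\|\geq\|L^nu_0\|\geq m_n\geq\mu^n$, which gives $r(L)=\lim_{n\to\infty}\|L^n\|^{1/n}\geq\mu>0$, completing the verification.

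For $\tilde L$ the argument is essentially unchanged. On $[a,b]\times[0,1]$ we have $k_S(t,s)\geq 0$, so $k_S^+(t,s)=k_S(t,s)\geq c_1\Phi(s)$ there, and $\tilde L$ is compact and maps $P$ into $P$ (indeed into $P\cap K$) by the $\tilde L$-analogue of Theorem~\ref{lcomp}. The same iteration of $u_0\equiv 1$ produces $r(\tilde L)\geq\mu>0$, and Krein--Rutman delivers a corresponding eigenfunction in $P$.
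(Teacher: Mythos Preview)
Your proposal is correct and follows exactly the route the paper indicates: invoke Krein--Rutman for the compact operator $L$ mapping the reproducing cone $P$ into itself (via Theorem~\ref{lcomp}), and use $(C_{3})$ to force $r(L)>0$. The paper does not spell out the positivity of the spectral radius, so your iteration argument on $u_0\equiv 1$---exploiting $k_S(t,s)\geq k(t,s)\geq c_1\Phi(s)$ for $t\in[a,b]$ and the strict positivity of $\int_a^b\Phi(s)g(s)\,ds$---is a legitimate and standard way to make that step explicit.
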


\begin{rem}
As a consequence of the two previous theorems, we have the above mentioned eigenfunction is in $P\cap K$.
\end{rem}

We utilize the following operator on $C[a,b]$ defined by, for $t\in[a,b]$,
$$
\bar{L} u(t):=  \int_a^b k^+_{S}(t,s)g(s)u(s)\,ds
$$
and the cone $P_{[a,b]}$ of positive functions in $C[a,b]$. 
 
In the recent papers \cite{jw-lms, jw-tmna}, Webb developed an elegant theory valid for $u_0$-positive linear operators relative to two cones. It turns out that our operator  $\bar{L}$ fits within this setting and, in particular, satisfies the assumptions of Theorem $3.4$ of \cite{jw-tmna}.  We state here a special case of Theorem $3.4$ of \cite{jw-tmna} that can be used for $\bar{L}$.
 
\begin{thm}\label{thmjeff}
Suppose that there exist $u\in P_{[a,b]}\setminus \{0\}$ and $\lambda>0$ such that $$\lambda u(t)\geq \bar{L}u(t),\ \text{for}\ t\in [a,b].$$ Then we have $r(\bar{L})\leq \lambda$.
\end{thm}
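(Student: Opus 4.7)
The plan is to exploit the classical Krein-Rutman theorem together with the positivity of $\bar{L}$. The strategy is: pass to the dual to obtain a nonnegative eigenfunctional of $\bar{L}^*$ at the eigenvalue $r(\bar{L})$, then pair this functional with the hypothesized inequality $\lambda u \geq \bar{L}u$ to read off a scalar inequality that yields $r(\bar{L}) \leq \lambda$.

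First I would verify that $\bar{L}:C[a,b]\to C[a,b]$ is a compact positive operator. Compactness follows, just as in the proof of Theorem \ref{lcomp}, from Ascoli-Arzel\`a applied to the integral operator with kernel $k_S^+(t,s)\,g(s)$, using $(C_1)$ and the fact that taking positive parts preserves continuity in $t$. Positivity is immediate since $k_S^+\geq 0$ and $g\geq 0$, so $\bar{L}(P_{[a,b]})\subseteq P_{[a,b]}$. Because $P_{[a,b]}$ is a normal reproducing cone with non-empty interior in $C[a,b]$, the Krein-Rutman theorem supplies a nonzero nonnegative functional $\ell^*\in (C[a,b])^*$, realized via the Riesz representation theorem as a finite nonnegative Radon measure $\mu$ on $[a,b]$, satisfying $\bar{L}^*\ell^*=r(\bar{L})\,\ell^*$.

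Next, I would pair the hypothesis with $\ell^*$. Since $\ell^*$ is order-preserving on $P_{[a,b]}$, integrating the pointwise inequality $\lambda u(t)\geq \bar{L}u(t)$ against $d\mu$ and invoking the adjoint relation gives
\[
\lambda\,\ell^*(u)\;\geq\; \ell^*(\bar{L}u)\;=\;(\bar{L}^*\ell^*)(u)\;=\;r(\bar{L})\,\ell^*(u).
\]
If $\ell^*(u)>0$, dividing out immediately yields $r(\bar{L})\leq\lambda$.

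The main obstacle is thus ensuring $\ell^*(u)>0$, i.e., that the support of $\mu$ is not disjoint from the set where $u>0$. Rewriting the eigenvalue equation $\bar{L}^*\mu=r(\bar{L})\mu$ as an explicit integral identity via Fubini, $\mu$ must be absolutely continuous with respect to $g(s)\,ds$ on $[a,b]$, with density $s\mapsto r(\bar{L})^{-1}\int_a^b k_S^+(t,s)\,d\mu(t)$. Invoking the lower bound $k_S(t,s)\geq c\,\Psi(s)$ valid for $t\in[a,b]$ from \eqref{cofks} in the proof of Theorem \ref{lcomp}, together with $(C_3)$, this density is strictly positive on a set of positive measure in $[a,b]$. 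Since $u\in P_{[a,b]}\setminus\{0\}$ is continuous and therefore strictly positive on some nondegenerate subinterval, we conclude $\ell^*(u)=\int_a^b u(s)\,d\mu(s)>0$. This support/irreducibility step is the most delicate point and is precisely what Webb's $u_0$-positive framework of \cite{jw-tmna} abstracts and handles in full generality.
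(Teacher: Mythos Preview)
The paper does not give its own proof of this result; it is quoted as a special case of Theorem~3.4 of \cite{jw-tmna}, the hypotheses of which (Webb's $u_0$-positivity relative to two cones) the authors simply assert are met by $\bar L$. So there is no in-paper argument to compare against.

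Your dual Krein--Rutman approach is the standard route, and the pairing step $\lambda\,\ell^*(u)\ge\ell^*(\bar L u)=r(\bar L)\,\ell^*(u)$ is correct once $\ell^*(u)>0$ is known. The gap is exactly where you locate it. You argue that the eigenmeasure has density bounded below by a multiple of $g\Psi$ (hence positive on a set of positive measure) and that $u$, being continuous and nonzero, is strictly positive on some subinterval; from this you conclude $\ell^*(u)>0$. But nothing forces these two sets to overlap: under the paper's hypotheses $g\ge 0$ a.e.\ with only $\int_a^b\Phi g\,ds>0$, the weight $g$ may vanish identically on the subinterval where $u>0$. In that situation $k_S^+(t,s)g(s)u(s)=0$ a.e.\ (since $k_S^+\le\Psi$), so $\bar L u=0$ and the hypothesis $\lambda u\ge\bar L u$ holds for every $\lambda>0$, giving no information on $r(\bar L)$. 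Your closing sentence effectively concedes this by deferring back to \cite{jw-tmna}, so the argument is not self-contained.

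A clean repair within your framework: apply $\bar L$ to the inequality to obtain $\lambda\,\bar L u\ge\bar L^2 u$, and run the pairing with $v:=\bar L u$ in place of $u$. By \eqref{cofks} one has $v(t)\ge c\int_a^b\Psi(s)g(s)u(s)\,ds$ for all $t\in[a,b]$, so either this integral vanishes (the degenerate case just described, which must be excluded by an extra hypothesis---this is precisely what $u_0$-positivity delivers) or $v$ is strictly positive on $[a,b]$, and then $\ell^*(v)>0$ for any nonzero nonnegative functional. Note that in the paper's sole application of Theorem~\ref{thmjeff} (the proof of Theorem~\ref{thmindeig}(3), case $\lambda=0$), the function $u$ lies in $\partial K_\rho$ and is therefore already strictly positive on $[a,b]$, so the degenerate case does not arise there.
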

We define the following extended real numbers. 
\begin{equation}\label{fas}
\begin{split}
f^{0}=\varlimsup_{u\to 0}\frac{\mathrm{ess} \sup\limits_{t \in [0,1]} f(t,u)}{|u|},& \;\;
f_{0}=\varliminf_{u\to 0^+}\frac{\mathrm{ess} \inf\limits_{t \in [a,b]}f(t,u)}{u}, \\
f^{\infty}=\varlimsup_{|u| \to+\infty}\frac{\mathrm{ess} \sup\limits_{t \in [0,1]} f(t,u)}{|u|},&
\;\; f_{\infty}=\varliminf_{u \to+\infty}\frac{\mathrm{ess} \inf\limits_{t \in [a,b]}f(t,u)}{u}.
\end{split}
\end{equation}

In order to prove the following Theorem, we adapt some of the proofs of \cite[Theorems 3.2-3.5]{jwkleig} to this new context.
\begin{thm}\label{thmindeig}
We have the following.
\begin{enumerate}
\item[$(1)$] If $\;0\le f^{0}<\mu(L)$, then there exists $\rho_{0}>0$ such
that
$
i_{K}(T,K_{\rho})=1$ for each $\rho\in (0,\rho_{0}].$

\item[$(2)$] If $\;0\le f^{\infty}<\mu(L)$, then there exists $R_{0}>0$ such
that
$
i_{K}(T,K_{R})=1$ for each $R > R_{0}.
$
\item[$(3)$] If $\mu(\tilde L)<f_{0}\leq \infty$, then there exists
$\rho_{0}>0$ such that 
$
i_{K}(T,K_{\rho})=0
$
 for each 
$\rho\in (0,\rho_{0}].$
\item[$(4)$] If $\mu(\tilde L)<f_{\infty} \leq \infty$, then there
exists $R_{1}>0$ such that 
$
i_{K}(T,K_{R})=0$
for each $R \geq R_{1}.$
\end{enumerate}
\end{thm}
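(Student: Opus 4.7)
The plan is to split the four parts into two pairs: $(1)$ and $(2)$ yield $i_K(T,K_\rho)=1$ via an iteration against the majorant $L$, while $(3)$ and $(4)$ yield $i_K(T,K_\rho)=0$ via a Krein--Rutman eigenfunction of $\tilde L$. In every case I reduce from $T$ to $S$ using Lemma~\ref{sandt}: if the argument shows $u\ne Su$ on $\partial K_\rho$ then $u\ne Tu$ there, and so $i_K(T,K_\rho)=i_K(S,K_\rho)$, which reduces the task to computing the index of $S$.

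For $(1)$, from $f^{0}<\mu(L)$ I pick $\eta\in(0,\mu(L)-f^{0})$ and, by \eqref{fas}, a $\rho_0>0$ with $f(t,u)\le (f^{0}+\eta)|u|$ for a.e.\ $t\in[0,1]$ and $|u|\le\rho_0$. For $\rho\in(0,\rho_0]$ and $u\in\partial K_\rho$, if $Su=\lambda u$ for some $\lambda\ge 1$ then
\begin{equation*}
|u(t)|\le \int_0^1|k_S(t,s)|g(s)f(s,u(s))\,ds\le (f^{0}+\eta)L|u|(t).
\end{equation*}
Since $L$ is positive, iterating this linear inequality $n$ times and passing to norms yields $\|u\|\le (f^{0}+\eta)^n\|L^n\|\|u\|$; letting $n\to\infty$ and using Gelfand's formula $\|L^n\|^{1/n}\to r(L)=1/\mu(L)$ produces $1\le(f^{0}+\eta)/\mu(L)<1$, a contradiction. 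The standard fixed-point index characterization then gives $i_K(S,K_\rho)=1$. Part $(2)$ is the analogue at infinity: combining $f^{\infty}<\mu(L)$ with $(C_4)$ yields $f(t,u)\le (f^{\infty}+\eta)|u|+\phi_{r_0}(t)$ globally, and iterating the corresponding inequality $|u|\le (f^{\infty}+\eta)L|u|+L\phi_{r_0}$ produces an a priori bound $\|u\|\le C_0$ depending on $\phi_{r_0}$ and $\mu(L)$ but not on $u$ (the series $\sum(f^{\infty}+\eta)^k\|L^k\|$ converges by the root test); any $R_0\ge C_0+1$ then suffices.

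For $(3)$, let $\varphi\in (P\cap K)\setminus\{0\}$ be the eigenfunction of $\tilde L$ with eigenvalue $r(\tilde L)=1/\mu(\tilde L)$ supplied by Theorem~\ref{specrad}. From $f_{0}>\mu(\tilde L)$ choose $\eta>0$ with $\mu(\tilde L)+\eta<f_{0}$ and $\rho_0>0$ so that $f(t,u)\ge (\mu(\tilde L)+\eta)u$ for a.e.\ $t\in[a,b]$ and $u\in(0,\rho_0]$. Fix $\rho\in(0,c\rho_0]$, so that any $u\in\partial K_\rho$ obeys $c\rho\le u(s)\le\rho\le\rho_0$ on $[a,b]$. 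I will show $u\ne Su+\tau\varphi$ for every $\tau\ge 0$, which gives $i_K(S,K_\rho)=0$. Otherwise, using $k_S(t,s)\ge 0$ for $t\in[a,b]$ (by \eqref{cofks}), so that $k_S=k_S^+$ there, together with the nonnegativity on $[a,b]$ of the remaining contributions to $Su$ and of $\tau\varphi$, one gets for $t\in[a,b]$
\begin{equation*}
u(t)\ge \int_a^b k_S^+(t,s)g(s)f(s,u(s))\,ds\ge (\mu(\tilde L)+\eta)\bar L u(t).
\end{equation*}
Since $u|_{[a,b]}\in P_{[a,b]}\setminus\{0\}$, Theorem~\ref{thmjeff} forces $r(\bar L)\le 1/(\mu(\tilde L)+\eta)$. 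The integral-extension bijection $\psi\mapsto r(\bar L)^{-1}\int_a^b k_S^+(\cdot,s)g(s)\psi(s)\,ds$ between eigenfunctions of $\bar L$ on $P_{[a,b]}$ and of $\tilde L$ on $P$ at a common eigenvalue shows $r(\bar L)=r(\tilde L)$, whence $\mu(\tilde L)\ge\mu(\tilde L)+\eta$, a contradiction. Setting $\tau=0$ in the same chain rules out $u=Su$, so Lemma~\ref{sandt} transfers the vanishing index to $T$. Part $(4)$ is analogous: choose $R_1$ so that $cR\ge R_1'$ whenever $R\ge R_1$, where $R_1'$ is the threshold above which $f(t,u)\ge (\mu(\tilde L)+\eta)u$ on $[a,b]$; then $u\in\partial K_R$ satisfies $u(s)\ge cR\ge R_1'$ on $[a,b]$ and the preceding eigenfunction argument runs verbatim.

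The main obstacle I expect is the consistent bookkeeping of the sign of $k_S$: the upper-bound estimates in $(1),(2)$ must pass through $|k_S|$ and hence $L$, whereas the lower-bound estimates in $(3),(4)$ can only exploit $k_S^+$, restricted to $[a,b]$, and therefore force the auxiliary operator $\bar L$ into the argument. The somewhat subtle point is then the identification $r(\bar L)=r(\tilde L)$, which is what allows Theorem~\ref{thmjeff} to be played off against the assumed gap $f_{0}-\mu(\tilde L)$ or $f_{\infty}-\mu(\tilde L)$ rather than some a priori larger quantity.
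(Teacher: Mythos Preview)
Your proposal is correct and follows the same overall strategy as the paper: reduce from $T$ to $S$ via Lemma~\ref{sandt}, use an $L$--iteration (equivalently, the Neumann series for $(\Id-(\mu(L)-\tau)L)^{-1}$) for $(1)$ and $(2)$, and use a Krein--Rutman eigenfunction together with Theorem~\ref{thmjeff} for $(3)$ and $(4)$.

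Two small points of comparison. First, in $(3)$ and $(4)$ the paper splits into the cases $\lambda>0$ and $\lambda=0$, handling $\lambda>0$ by the classical iteration $u\ge n\lambda\varphi_1$ and reserving Theorem~\ref{thmjeff} for $\lambda=0$; you treat all $\tau\ge0$ at once via Theorem~\ref{thmjeff}, which is a legitimate (and slightly shorter) route. Second, your ``bijection'' argument for $r(\bar L)=r(\tilde L)$ is stronger than what is needed and is stated a bit loosely: all the argument requires is $r(\bar L)\ge r(\tilde L)$, and this follows immediately because the restriction to $[a,b]$ of the positive eigenfunction $\varphi_1$ of $\tilde L$ is a nonzero element of $P_{[a,b]}$ satisfying $\bar L(\varphi_1|_{[a,b]})=r(\tilde L)\,\varphi_1|_{[a,b]}$. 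The paper proceeds exactly this way.
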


\begin{proof}
We show the statements for the operator $S$ instead of $T$, in view of Lemma~\ref{sandt}.

$(1)$ Let $\tau$ be such that $f^0\le \mu(L)-\tau$. Then there exists $\rho_0\in(0,1)$ such that for all $u\in[-\rho_0,\rho_0]$ and almost every $t\in [0,1]$ 
we have
\begin{equation*}
f(t,u)\le(\mu(L)-\tau)|u|.
\end{equation*}
Let $\rho\in (0,\rho_0]$.  We prove that $Su\ne\lambda u$ for $u\in\partial K_\rho$ and $\lambda\ge 1$, which implies $ i_{K}(S,K_{\rho})=1$. In fact, if we assume otherwise, then there exists $u\in\partial K_\rho$ and $\lambda\ge1$ such that $\l u=Su$. Therefore,
\begin{align*}
 |u(t)|\leq &\lambda |u(t)|=  |Su(t)|  =  \left|\int_0^1k_S(t,s)g(s)f(s,u(s))ds\right|\\
 \le & \int_0^1|k_S(t,s)|g(s)f(s,u(s))ds \le  (\mu(L)-\tau)\int_0^1|k_S(t,s)|g(s)|u(s)|ds\\ 
 =&(\mu(L)-\tau)L |u|(t).
 \end{align*}
Thus, we have,  for $ t\in [0,1]$,
\begin{align*}
|u(t)|\le & (\mu(L)-\tau)L[(\mu(L)-\tau)L|u|(t)]\\ &=(\mu(L)-\tau)^2L^2|u|(t)\le\cdots\le(\mu(L)-\tau)^nL^n|u|(t),
 \end{align*}
thus, taking the norms, $1\le(\mu(L)-\tau)^n\|L^n\|$, and then
$$1\le(\mu(L)-\tau)\lim_{n\to\infty}\|L^n\|^\frac{1}{n}=\frac{\mu(L)-\tau}{\mu(L)}<1,$$
a contradiction. \par

$(2)$ Let $\tau\in\bR^+$ such that $f^\infty<\mu(L)-\tau$. Then there exists $R_1>0$ such that for every $|u|\ge R_1$ and almost every $t\in [0,1]$
$$
f(t,u)\le (\mu(L)-\tau)|u| .
$$
Also, by $(C_4)$ there exists $\phi_{R_1}\in L^\infty[0,1]$ such that $f(t,u)\le\phi_{R_1}(t)$ for all $u\in[-R_1,R_1]$ and almost every $t\in [0,1]$. Hence,
\begin{equation}\label{supest}
f(t,u)\le(\mu(L)-\tau)|u| +\phi_{R_1}(t)\ \text{for all}\  u\in \bR\ \text{and almost every }\ t\in [0,1].
\end{equation}
Denote by $\Id$  the identity operator and observe that $\Id-(\mu(L)-\tau)L$  is invertible since $(\mu(L)-\tau) L$ has spectral radius less than one. Furthermore, by the Neumann series expression, 
$$
[\Id-(\mu(L)-\tau) L]^{-1}=\sum_{k=0}^\infty[(\mu(L)-\tau) L]^k
$$
therefore,  $[\Id-(\mu(L)-\tau) L]^{-1}$ maps $P$ into $P$, since $L$ does.

Let 
$$
C:=\int_a^b\Phi(s)g(s)\phi_{R_1}(s)ds \ \text{and}\ R_0:= \| [\Id-(\mu(L)-\tau) L]^{-1}C\|.
$$
 Now we prove that for each $R>R_0$, $S u\ne\lambda u$ for all $u\in\partial K_R$ and $\lambda\ge 1$, which implies $ i_{K}(S,K_{R})=1$. Assume otherwise: there exists $u\in\partial K_R$ and $\lambda\ge 1$ such that $\lambda u=Su$.  Taking into account the inequality \eqref{supest}, we have for $t\in [0,1]$
\begin{multline*}
|u(t)|\leq\lambda |u(t)|=  |Su(t)|  =  \left|\int_0^1k_S(t,s)g(s)f(s,u(s))ds\right|\\ \le  \int_0^1|k_S(t,s)|g(s)f(s,u(s))ds 
 \le  (\mu(L)-\tau)\int_0^1|k_S(t,s)|g(s)|u(s)|ds+C
\\ =(\mu(L)-\tau)L |u|(t)+C,
\end{multline*}
which implies 
$$
 [\Id-(\mu(L)-\tau) L]|u|(t)\le C.
 $$
 Since $(\Id-(\mu(L)-\tau) L)^{-1}$ is non-negative, we have
$$
|u|(t)\le [\Id-(\mu(L)-\tau) L]^{-1} C\leq R_0.
$$
Therefore, we have $\|u\|\le R_0<R$, a contradiction.\par
$(3)$ There exists $\rho_0>0$ such that  for all $u\in[0,\rho_0]$ and  all $t\in [a,b]$ we have
$$
f(t,u)\geq \mu(\tilde L)u.
$$
 Let $\rho\in(0,\rho_0]$. Let us prove that
$u\ne Su+\lambda\varphi_1$ for all $u$ in $\partial K_\rho$ and $\lambda\geq 0$,
where $\varphi_1\in K\cap P$ is the eigenfunction of $\tilde L$ with $\|\varphi_1\|=1$ corresponding to the eigenvalue $1/\mu(\tilde L)$. This implies that $ i_{K}(S,K_{\rho})=0$.\par
Assume, on the contrary, that there exist $u\in\partial K_\rho$ and $\lambda\geq 0$ such that $u=Su+\lambda\varphi_1$. \\
We distinguish two cases. Firstly we discuss the case $\lambda>0$. We have, for $t\in [a,b]$, 
\begin{align*}
 u(t)=& \int_0^1k_S(t,s)g(s)f(s,u(s))ds +\lambda \varphi_1(t)\geq\int_a^b k^+_S(t,s)g(s)f(s,u(s))ds+\lambda \varphi_1(t) \\ \geq &\mu(\tilde L) \int_a^b k^+_S(t,s)g(s)u(s)ds+\lambda \varphi_1(t) =\mu(\tilde L) \tilde Lu(t)+\lambda\varphi_1(t).
 \end{align*}
Moreover, we have  $u(t)\ge\lambda\varphi_1(t)$  and then $\tilde Lu(t)\ge\lambda \tilde L\varphi_1(t)\ge \dfrac{\lambda}{\mu(\tilde L)}\varphi_1(t)$ in such a way that we obtain 
$$
u(t)\ge\mu(\tilde L) \tilde Lu(t)+\lambda\varphi_1(t)\ge2\lambda\varphi_1(t),\ \text{   for   } t\in[a,b].
$$
By iteration, we deduce that, for $ t\in[a,b]$, we get
$$
u(t)\ge n\lambda\varphi_1(t)  \text{   for every  } n\in\bN,
$$
 a contradiction because $\|u\|=\rho$.\par
 Now we consider the case $\lambda=0$.  Let $\varepsilon>0$ be such that for all $u\in[0,\rho_0]$ and almost every $t\in [a,b]$ 
we have
\begin{equation*}
f(t,u)\geq (\mu(\tilde L)+\varepsilon)u.
\end{equation*}
We have, for $t\in [a,b]$, 
\begin{equation*}
 u(t)=\int_0^1k_S(t,s)g(s)f(s,u(s))ds \geq\int_a^b k^+_S(t,s)g(s)f(s,u(s))ds \geq(\mu(\tilde L)+\varepsilon) \tilde Lu(t).
\end{equation*}

Since $\tilde L\varphi_1(t)=r(\tilde L)\varphi_1(t)$ for $t\in[0,1]$, we have, for $t\in[a,b]$,
$$
\bar{L} \varphi_1(t)=\tilde L\varphi_1(t)=r(\tilde L)\varphi_1(t),
$$
and we obtain $r(\bar{L})\geq r(\tilde L)$. On the other hand, we have, for $t\in [a,b]$, 
\begin{equation*}
 u(t)\geq(\mu(\tilde L)+\varepsilon) \tilde Lu(t)=(\mu(\tilde L)+\varepsilon) \bar{L} u(t).
\end{equation*}
where $u(t)>0$. Thus, utilizing Theorem~\ref{thmjeff}, we have  $r(\bar{L})\leq \dfrac{1}{\mu(\tilde L)+\varepsilon}$
and therefore $r(\tilde{L})\leq \dfrac{1}{\mu(\tilde L)+\varepsilon}$. This gives $\mu(\tilde L)+\varepsilon\leq \mu(\tilde L)$, a contradiction. \par

$(4)$ Let $R_1>0$ such that 
$$
f(t,u)>\mu(\tilde L)u
$$
 for all $u\ge c R_1$, $c$ as in \eqref{cofks}, and  all $t\in[a,b]$. \\
 Let  $R\geq R_1$. We prove that
$u\ne Su+\lambda\varphi_1$ for all $u$ in $\partial K_R$ and $\lambda\geq 0$, which  implies $ i_{K}(S,K_{R})=0$. \par
Assume now, on the contrary, that there exist $u\in\partial K_R$ and $\lambda\geq 0$ such that $u=Su+\lambda\varphi_1$. Observe that for $u\in\partial K_R$, we have   $u(t)\ge c\|u\|=c R\ge c R_1$ for $t\in[a,b]$. Hence, we have $f(t,u(t))>\mu(\tilde L)u(t)$ for $t\in[a,b]$. This implies, proceeding as in the proof of the statement~$(3)$ for the case $\lambda>0$, that
$$
u(t)\ge\mu(\tilde L) \tilde Lu(t)+\lambda\varphi_1(t)\ge2\lambda\varphi_1(t), \text{ for } t\in[a,b].$$
Then, for $ t\in[a,b]$, we have $u(t)\ge n\lambda\varphi_1(t)$ for every $n\in\bN$, a contradiction because $\|u\|=R$.
The proof in the case $\lambda=0$ is treated as in the proof of the statement~$(3)$.
\end{proof}

The following Theorem, in the line of \cite{jwgi-lms,jwmz-na}, applies the index results in Lemmas \ref{thmind1} and \ref{thmind0} and Theorem \ref{thmindeig}  in order to get some results on existence of multiple nontrivial solutions for the equation \eqref{eqthamm}.

\begin{thm} \label{thmones} 
Assume that conditions $(C1)$-$(C8)$ hold with $\alpha, \beta$ given by positive measures.\\ The integral equation~\eqref{eqthamm} has at least one non-trivial solution in $K$ if one of the following conditions holds. 
\begin{enumerate}
\item[$(H_{1})$] $0\le f^{0}<\mu(L)$ and $\mu(\tilde L)<f_{\infty}\le
\infty$.
\item[$(H_{2})$] $0\le f^{\infty}<\mu(L)$ and $\mu(\tilde L)<f_{0}\le \infty$.
\end{enumerate}
The integral equation~\eqref{eqthamm} has at least two non-trivial solutions in $K$ if one of
the following conditions holds.
\begin{enumerate}
\item[$(Z_{1})$] $0\leq f^{0}<\mu(L)$,  $f_{{\rho},\rho/c}>M_S(a,b)$ for
some $\rho>0$, and $0\le f^{\infty}<\mu(L)$.
\item[$(Z_{2})$] $\mu(\tilde L)<f_{0}\le \infty$, $f^{-\rho,\rho}< m_S$ for some
$\rho>0$, and $\mu(\tilde L)<f_{\infty}\leq \infty$.
\end{enumerate}
The integral equation~\eqref{eqthamm} has at least three non-trivial solutions in $K$ if one of
the following conditions holds.
\begin{enumerate}
\item[$(T_{1})$] There exist  $0<{\rho}_{1} < {\rho}_{2} < \infty$, such
that
\begin{equation*}\;\;\mu(\tilde L)<f_{0}\leq \infty,  \;\; f^{-\rho_1,\rho_{1}}<m_S,
\;\; f_{\rho_{2},\rho_{2}/c}>M_S(a,b), \;\; 0 \leq f^{\infty}
<\mu(L).
\end{equation*}
\item[$(T_{2})$] There exist  $0<{\rho}_{1} < c{\rho}_{2} < \infty$, such
that
\begin{equation*}\;\; 0 \leq f^{0}<\mu(L),
\;\;f_{\rho_{1},\rho_{1}/c}>M_S(a,b),
\;\; f^{-\rho_2,\rho_{2}}<m_S, \;\; \mu(\tilde L)_1<f_{\infty}\le
\infty.
\end{equation*}
\end{enumerate}
\end{thm}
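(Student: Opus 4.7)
\textbf{Proof plan for Theorem \ref{thmones}.}

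The strategy is to combine, via Lemma \ref{sandt}, the four index formulas coming from the eigenvalue criteria in Theorem \ref{thmindeig} with the pointwise-growth index formulas from Lemmas \ref{thmind1} and \ref{thmind0} applied to the operator $S$ (whose kernel is $k_S$, giving the constants $m_S$ and $M_S(a,b)$ in place of $m$ and $M(a,b)$). Once the indices on an appropriately nested collection of open sets $K_{\rho_1}\subset (V_{\rho_2}\text{ or }K_{\rho_2})\subset\cdots\subset K_R$ alternate between $0$ and $1$, property (4) of the fixed-point index produces a fixed point of $T$ in each successive annular region.

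First I would dispatch the single-solution statements. Under $(H_1)$, Theorem \ref{thmindeig}(1) produces $\rho_0>0$ with $i_K(T,K_\rho)=1$ for all $\rho\in(0,\rho_0]$, while Theorem \ref{thmindeig}(4) produces $R_0>0$ with $i_K(T,K_R)=0$ for all $R\ge R_1$. Choosing $\rho<R$ and using property (4) yields a nontrivial fixed point in $K_R\setminus\overline{K_\rho}$. The case $(H_2)$ is symmetric, using parts (2) and (3) of Theorem \ref{thmindeig}.

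For the two-solution statements I would arrange three nested sets with indices $(1,0,1)$ or $(0,1,0)$. Under $(Z_1)$: choose small $\rho_1$ from Theorem \ref{thmindeig}(1) so that $i_K(T,K_{\rho_1})=1$; use the $S$-version of Lemma \ref{thmind0} together with Lemma \ref{sandt} to get $i_K(T,V_\rho)=0$ from $f_{\rho,\rho/c}>M_S(a,b)$; and pick large $R>\rho/c$ from Theorem \ref{thmindeig}(2) so that $i_K(T,K_R)=1$. The inclusions $K_{\rho_1}\subset V_\rho$ (because $\min_{[a,b]}u\le\|u\|<\rho_1<\rho$) and $\overline{V_\rho}\subset K_{\rho/c}\subset K_R$ let property (4) deliver solutions in $V_\rho\setminus\overline{K_{\rho_1}}$ and in $K_R\setminus\overline{V_\rho}$. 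The case $(Z_2)$ is analogous, using Theorem \ref{thmindeig}(3),(4) for the two outer sets and the $S$-version of Lemma \ref{thmind1} (via $f^{-\rho,\rho}<m_S$) for the inner $K_\rho$, with the ordering $\rho_1<\rho<R$.

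For the three-solution statements $(T_1),(T_2)$ I would chain four sets whose indices alternate. For $(T_1)$: take $\sigma_1<\rho_1$ from Theorem \ref{thmindeig}(3) giving index $0$ on $K_{\sigma_1}$; $i_K(T,K_{\rho_1})=1$ from $f^{-\rho_1,\rho_1}<m_S$; $i_K(T,V_{\rho_2})=0$ from $f_{\rho_2,\rho_2/c}>M_S(a,b)$ (requiring $\rho_1<\rho_2$, which is free since $K_{\rho_1}\subset V_{\rho_2}$ once $\rho_1<\rho_2$); and $i_K(T,K_R)=1$ for some $R>\rho_2/c$ from Theorem \ref{thmindeig}(2). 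Three applications of property (4) give three nontrivial fixed points. The case $(T_2)$ mirrors this with indices $(1,0,1,0)$, using the prescribed ordering $\sigma_1<\rho_1$, $\rho_1/c<\rho_2<R$ and the corresponding parts of Theorem \ref{thmindeig}.

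The routine part is bookkeeping the inclusions $K_{\rho_1}\subset V_\rho\subset K_{\rho/c}$ and verifying no fixed points occur on the boundary of each open set (which is exactly what each index lemma rules out). The only genuine subtlety is that the constants $m_S$ and $M_S(a,b)$, as well as the operators $L,\tilde L$, are built from $k_S$ rather than $k$; the transition is legitimized by Lemma \ref{sandt}, which guarantees $i_K(T,\cdot)=i_K(S,\cdot)$ whenever $T$ has no boundary fixed points, and by the fact that the proofs of Lemmas \ref{thmind1} and \ref{thmind0} carry over verbatim to $S$ since $S$ has the same form as $T$ with trivial perturbation functionals and kernel $k_S$.
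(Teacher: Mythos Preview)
Your proposal is correct and matches the paper's own approach: the paper does not write out a proof of Theorem~\ref{thmones} but simply says it ``applies the index results in Lemmas~\ref{thmind1} and~\ref{thmind0} and Theorem~\ref{thmindeig},'' which is exactly the alternating-index scheme you describe, with Lemma~\ref{sandt} mediating the passage between $T$ and $S$. Your bookkeeping of the inclusions $K_{\rho_1}\subset V_{\rho_2}\subset K_{\rho_2/c}$ and of the choice of outer radii via parts (1)--(4) of Theorem~\ref{thmindeig} is precisely what is needed.
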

It is possible to give criteria for the existence of an arbitrary
number of nontrivial solutions by extending the list of conditions.
We omit the routine statement of such results.

The following Lemma sheds some light on the relation between some of these constants.

\begin{lem}
The following relations hold
$$M_S(a,b)\geq\mu(\tilde L)\geq\mu( L)\geq m_S.$$
\end{lem}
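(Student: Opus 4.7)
I would prove the three inequalities separately, each via a Krein--Rutman style spectral radius argument on one of the positive operators $L$, $\tilde L$, $\bar L$.

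The middle inequality $\mu(\tilde L)\ge\mu(L)$ is monotonicity of the spectral radius under domination. Since $k_S^+(t,s)\le|k_S(t,s)|$ and the integration domain $[a,b]$ of $\tilde L$ is contained in $[0,1]$, for every $u\in P$ one has $0\le \tilde L u(t)\le Lu(t)$ pointwise. Iterating, $\tilde L^n u\le L^n u$ on $P$, so the restricted operator norms satisfy $\|\tilde L^n|_P\|\le\|L^n|_P\|$. Gelfand's formula then yields $r(\tilde L)\le r(L)$, i.e.\ $\mu(\tilde L)\ge\mu(L)$.

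For $M_S(a,b)\ge\mu(\tilde L)$ I would work with the auxiliary operator $\bar L$ on $C[a,b]$ from Theorem~\ref{thmjeff}. Because $k_S(t,s)\ge 0$ for $t\in[a,b]$ (by the same splitting used in the proof of Theorem~\ref{lcomp} to establish \eqref{cofks}), one has $k_S^+(t,s)=k_S(t,s)$ there, hence applying $\bar L$ to the constant function $\mathbf 1\in P_{[a,b]}$,
$$\bar L\mathbf 1(t)=\int_a^b k_S(t,s)g(s)\,ds\ge\frac{1}{M_S(a,b)},\qquad t\in[a,b],$$
by the very definition of $M_S(a,b)$. This is the reverse of the hypothesis of Theorem~\ref{thmjeff}; its standard companion---if $\bar L u\ge\lambda u$ for some $u\in P_{[a,b]}\setminus\{0\}$ then $r(\bar L)\ge\lambda$, obtained by iterating and using normality of $P_{[a,b]}$ together with compactness of $\bar L$---yields $r(\bar L)\ge 1/M_S(a,b)$. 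Since $r(\bar L)=r(\tilde L)$ by restriction and integral-formula extension (as already noted in the proof of Theorem~\ref{thmindeig}(3)), we conclude $\mu(\tilde L)\le M_S(a,b)$.

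For the remaining inequality $\mu(L)\ge m_S$ the plan is to invoke Theorem~\ref{specrad} to obtain an eigenfunction $\varphi\in P$ with $L\varphi=r(L)\varphi$, normalized so $\|\varphi\|=\varphi(t_0)=1$ at some $t_0\in[0,1]$. The orthogonal decomposition $|k_S(t_0,\cdot)|=k_S^+(t_0,\cdot)+k_S^-(t_0,\cdot)$ (the two parts have disjoint supports in $s$) gives
$$r(L)=\int_0^1 k_S^+(t_0,s)g(s)\varphi(s)\,ds+\int_0^1 k_S^-(t_0,s)g(s)\varphi(s)\,ds,$$
with each summand bounded by $1/m_S$ via $\varphi\le 1$ and the definition of $m_S$. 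The main obstacle is that the triangle-inequality estimate only gives $r(L)\le 2/m_S$; closing the factor-of-two gap requires exploiting both the disjoint supports of $k_S^\pm(t_0,\cdot)$ and the fact that $\varphi\in K$ forces concentration on $[a,b]$, where $k_S^-\equiv 0$, so that at an appropriate maximizing $t_0$ only one of the two integrals can contribute near its bound of $1/m_S$.
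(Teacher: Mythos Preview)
Your treatments of $\mu(\tilde L)\ge\mu(L)$ and (modulo the point below) of $M_S(a,b)\ge\mu(\tilde L)$ are sound, but for the latter the paper's argument is more direct and avoids a loose end in yours. The paper takes the Krein--Rutman eigenfunction $\varphi\in P\cap K$ of $\tilde L$ itself (Theorem~\ref{specrad}), writes for $t\in[a,b]$
$$
\varphi(t)=\mu(\tilde L)\int_a^b k_S(t,s)g(s)\varphi(s)\,ds\ge \mu(\tilde L)\Bigl(\min_{[a,b]}\varphi\Bigr)\int_a^b k_S(t,s)g(s)\,ds,
$$
takes the infimum over $t\in[a,b]$, and divides by $\min_{[a,b]}\varphi>0$ (positive since $\varphi\in K$). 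Your route via $\bar L$ and the constant function needs $r(\tilde L)\ge r(\bar L)$ to finish, whereas the passage in the proof of Theorem~\ref{thmindeig}(3) you cite establishes only the \emph{opposite} inequality $r(\bar L)\ge r(\tilde L)$. The missing direction is true (for instance via the identity $r(AB)=r(BA)$, writing $\tilde L=IR$ and $\bar L=RI$ with $R$ restriction to $[a,b]$ and $I$ the obvious integral extension), but it has to be supplied.

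The genuine gap is in $\mu(L)\ge m_S$. Your pointwise estimate at the maximizer $t_0$ of $\varphi$ yields only $r(L)\le\int_0^1|k_S(t_0,s)|g(s)\,ds\le 2/m_S$, and the sketch for recovering the missing factor of two does not work as stated: the point $t_0$ where $\|\varphi\|$ is attained need not lie in $[a,b]$, so there is no reason for $k_S^-(t_0,\cdot)$ to vanish, and the cone condition $\varphi\ge c\|\varphi\|$ on $[a,b]$ gives no control over the $k_S^-$ contribution at such a $t_0$. No choice of ``appropriate maximizing $t_0$'' is visible that forces only one of $\int_0^1 k_S^\pm(t_0,s)g(s)\,ds$ to be near its supremum. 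The paper itself does not give a self-contained argument for this step either; it defers to Theorem~2.8 of Webb--Lan~\cite{jwkleig}, so you should consult that reference to see exactly which estimate is being invoked and how the definition of $m_S$ through $\max\bigl\{\int k_S^+,\int k_S^-\bigr\}$ rather than $\int|k_S|$ enters.
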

\begin{proof}
The fact that $\mu( L)\geq m_S$ essentially follows from Theorem 2.8 of \cite{jwkleig}. The comment that follows after Theorem 3.4 of \cite{jwkleig} also applies in our case, giving $\mu(\tilde L)\geq\mu( L)$.

We now prove $M_S(a,b)\geq \mu(\tilde{L} )$.
Let $\varphi \in P\cap K$ be a corresponding eigenfunction of norm $1$ of $1/\mu (\tilde{L})$ for the operator $\tilde{L}$, that is $\varphi=\mu(\tilde{L})\tilde{L} (\varphi)$ and $\Vert \varphi\Vert=1$.
Then, for $t\in [a,b]$ we have
\begin{equation*}
\varphi(t)= \mu(\tilde L)\int_a^bk_S(t,s)g(s)\varphi(s)ds\geq\mu(\tilde L)\min_{t\in [a,b]}\varphi(t)\int_a^bk_S(t,s)g(s)ds
.\end{equation*}
Taking the infimum over $[a,b]$, we obtain 
\begin{equation*}
\min_{t\in [a,b]}\varphi(t)\geq  \mu(\tilde{L} ) \min_{t\in [a,b]}\varphi(t)/M_S(a,b),
\end{equation*}
that is $M_S(a,b)\geq \mu(\tilde{L} )$.
\end{proof}
In order to present an index zero result of a different nature, we introduce the following operator
$$
L_+\,u(t):=  \int_0^1 k^+_{S}(t,s)g(s)u(s)\,ds,
$$
for which a result similar to Theorems~\ref{lcomp} and~\ref{specrad} holds.

In the next Theorem we use the following notation, with $c$ as in \eqref{cofks},
\begin{equation}\label{fmd}
\tilde f_{0}=\varliminf_{u\to 0}\frac{\mathrm{ess}\inf\limits_{t \in [0,1]}f(t,u)}{|u|},
\quad
\tilde c:=\frac{1}{c}\sup_{t\in [0,1]}\frac{\int_0^1k_S^-(t,s)g(s)ds}{\int_a^bk_S^+(t,s)g(s)ds}.
\end{equation}
 
\begin{thm}\label{thmindnew}
 If $\mu(L_+)< \tilde f_0-\tilde c\, f^0$,  then there exists
$\rho_{0}>0$ such that for each $\rho\in (0,\rho_{0}]$, if $u \neq
Tu$ for $u \in \partial{K}_{\rho}$, it is satisfied that
$
i_{K}(T,K_{\rho})=0.
$
\end{thm}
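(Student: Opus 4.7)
By Lemma~\ref{sandt}, $i_K(T,K_\rho)=i_K(S,K_\rho)$ whenever $u\ne Tu$ on $\partial K_\rho$, so it suffices to prove $i_K(S,K_\rho)=0$. The plan is to apply part~$(1)$ of the fixed-point-index lemma with the distinguished element $e=\varphi_+\in P\cap K$, the normalised eigenfunction of $L_+$ associated with its eigenvalue $1/\mu(L_+)$, whose existence follows from the Krein--Rutman analogue of Theorem~\ref{specrad} announced for $L_+$. Concretely, I will exhibit $\rho_0>0$ such that, for every $\rho\in(0,\rho_0]$, no $u\in\partial K_\rho$ can satisfy $u=Su+\lambda\varphi_+$ with $\lambda>0$.

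Fix $\varepsilon>0$ small enough that $(\tilde f_0-\varepsilon)-\tilde c(f^0+\varepsilon)>\mu(L_+)$; this is possible by the strict hypothesis. From~\eqref{fas} and~\eqref{fmd}, choose $\rho_0>0$ so that $(\tilde f_0-\varepsilon)|u|\le f(t,u)\le(f^0+\varepsilon)|u|$ for almost every $t\in[0,1]$ and every $|u|\le\rho_0$. The crucial pointwise inequality is
$$\int_0^1 k_S^-(t,s)g(s)|u(s)|\,ds\le\tilde c\, L_+|u|(t),\qquad t\in[0,1],$$
valid for every nonzero $u\in K$. To prove it I would bound $|u(s)|\le\|u\|$ globally, use the defining inequality
$$\int_0^1 k_S^-(t,s)g(s)\,ds\le\tilde c\, c\int_a^b k_S^+(t,s)g(s)\,ds$$
coming from~\eqref{fmd}, and exploit $|u(s)|=u(s)\ge c\|u\|$ on $[a,b]$ to rewrite $\int_a^b k_S^+(t,s)g(s)\,ds\le(c\|u\|)^{-1}L_+|u|(t)$; the factors of $\|u\|$ cancel. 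Splitting $k_S=k_S^+-k_S^-$ and substituting the two bounds on $f$ then yield, for $u\in\partial K_\rho$ with $\rho\le\rho_0$ and every $t\in[0,1]$,
$$Su(t)\ge(\tilde f_0-\varepsilon)L_+|u|(t)-(f^0+\varepsilon)\tilde c\, L_+|u|(t)\ge\mu(L_+)L_+|u|(t)\ge 0.$$

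Now suppose, for a contradiction, that $u=Su+\lambda\varphi_+$ for some $u\in\partial K_\rho$ and some $\lambda>0$. The previous display forces $u(t)\ge\lambda\varphi_+(t)\ge 0$ on $[0,1]$, hence $u\in P\cap K$ and $|u|=u$. Order-preservation of $L_+$ together with $L_+\varphi_+=\varphi_+/\mu(L_+)$ gives $\mu(L_+)L_+u\ge\lambda\varphi_+$, so that $u\ge\mu(L_+)L_+u+\lambda\varphi_+\ge 2\lambda\varphi_+$; a straightforward induction then produces $u\ge n\lambda\varphi_+$ for every $n\in\bN$, contradicting $\|u\|=\rho<\infty$ (recall $\|\varphi_+\|=1$). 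Part~$(1)$ of the index lemma delivers $i_K(S,K_\rho)=0$, and hence $i_K(T,K_\rho)=0$. I expect the most delicate ingredient to be the $\tilde c$-inequality: it is the sole step that absorbs the sign-changing contribution of $k_S^-$ into the positive operator $L_+$, and both the cone condition $|u|\ge c\|u\|$ on $[a,b]$ and the precise normalisation in~\eqref{fmd} are required in order for the factors of $\|u\|$ to cancel cleanly.
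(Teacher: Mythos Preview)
Your proof is correct and follows essentially the same route as the paper's: reduction to $S$ via Lemma~\ref{sandt}, the key $\tilde c$-inequality absorbing $k_S^-$ into $L_+$, and the pushing-along-$\varphi_+$ bootstrap to a contradiction. The only cosmetic differences are that the paper fixes specific thresholds $\mu(L_+)+\tilde c f^0$ and $f^0+\tfrac12\mu(L_+)$ in place of your generic $\varepsilon$, and iterates on $|u|$ (obtaining $|u|\ge(\tfrac{n}{2}+1)\lambda\varphi_+$) rather than first observing $Su\ge0$, hence $u\ge0$, and iterating on $u$ directly as you do; your version is arguably a touch cleaner.
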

\begin{proof}
Firstly, since $u\in K$ we have, for $t\in[0,1]$,
\begin{align*}
\int_0^1k_S^-(t,s)g(s)|u(s)|ds\le  &\int_0^1k_S^-(t,s)g(s)\|u\|ds\le\tilde c\int_a^bk_S^+(t,s)g(s)c\|u\|ds \\ \le & \tilde c\int_a^bk_S^+(t,s)g(s)|u(s)|ds\leq\tilde c\, L_+|u|(t).
\end{align*}

 Observe that the hypothesis $\mu(L_+)< \tilde f_0-\tilde c\, f^0$ implies $\tilde f_0,f^0<\infty$. Let $\rho_0>0$ such that 
\begin{equation*}
f(t,u)\geq (\mu(L_+)+\tilde c\, f^0)|u|   \, \text{    and     } f(t,u)\le (f^0+\mu(L_+)/2)|u|
\end{equation*}
for all $u\in[-\rho_0,\rho_0]$ and almost all $t\in [0,1]$. \\
Let $\rho\leq \rho_0$. We will prove that
$u\ne Su+\lambda\varphi_+$ for all $u$ in $\partial K_\rho$ and $\lambda>0$ where $\varphi_+\in K$ is an eigenfunction of $L_+$ related to the eigenvalue $1/\mu(L_+)$ such that $\|\varphi_+\|=1$.
\par
Assume now, on the contrary, that there exist $u\in\partial K_\rho$ and $\lambda>0$ such that $u(t)=Su(t)+\lambda\varphi_+(t)$ for all $t\in [0,1]$. Hence, we have
$$
u(t)=-\int_0^1 k^-_{S}(t,s)g(s)f(s,u(s))\,ds+\int_0^1 k^+_{S}(t,s)g(s)f(s,u(s))\,ds+\lambda\varphi_+(t).
$$
On one hand, we have
\begin{align*}
 u(t)+\int_0^1k_S^-(t,s)g(s)f(s,u(s))ds\le &|u(t)|+[f^ 0+\tfrac{1}{2}\mu(L_+)] \int_0^1k_S^-(t,s)g(s)|u(s)|ds\\ &\le  |u(t)|+\tilde c[f^ 0+\tfrac{1}{2}\mu(L_+)] L_+|u|(t).
\end{align*}
On the other hand, we have
$$
\int_0^1 k^+_{S}(t,s)g(s)f(s,u(s))\,ds+\lambda\varphi_+(t)\ge (\mu(L_+)+\tilde c\, f^ 0)L_+|u|(t)+\lambda\varphi_+(t).
$$
Therefore, we obtain
$$
(\mu(L_+)+\tilde c\, f^ 0)L_+|u|(t)+\lambda\varphi_+\le |u(t)|+\tilde c\,[f^ 0+\tfrac{1}{2}\mu(L_+)] L_+|u|(t),
$$
or, equivalently,
$$
\tfrac{1}{2}\mu(L_+) L_+|u|(t)+\lambda\varphi_+(t)\le|u(t)|.
$$
Hence we get
 $$\lambda\varphi_+(t)\le|u(t)|.$$
Reasoning as in the proof of $(3)$ of Theorem \ref{thmindeig}, we obtain
$$
|u(t)|\ge\lambda\tfrac{1}{2}\mu(L_+) L_+\varphi_+(t)+\lambda\varphi_+(t)=\tfrac{3}{2}\lambda\varphi_+(t).
$$
By induction we deduce that $|u(t)|\ge (\tfrac{n}{2}+1)\lambda\varphi_+(t)$ for every $n\in\bN$, a contradiction since $\|u\|=\rho$.
\end{proof}

As in the Theorem \eqref{thmones}, results on existence of multiple nontrivial solutions can be established. We omit the  statement of such results.

\begin{rem} 
The hypothesis in the Theorem \ref{thmindnew} imply that $\tilde c\in(0,1)$. Also, if $\tilde f_0=f^0=f_0$ then the hypothesis in Theorem \ref{thmindnew} is equivalent to $\mu(L^+)/(1-\tilde c)<\tilde f_0<\infty$. Furthermore, if $[a,b]=[0,1]$, then $L=L^+=\tilde L$ and the growth condition becomes $\mu(L)<\tilde f_0<\infty$, which is condition (3) in Theorem~\ref{thmindeig} for $f_0<\infty$.
\end{rem}

\section{Study of the Green's functions of the BVPs \eqref{shiftint1}-\eqref{shiftint2}}\label{Greenstudy}
In this Section we study the properties of the Green's function of the BVP
\begin{equation*}
 \e u''(t)+ \omega^2u(t)=y(t),\quad u'(0)=u'(1)=0,
 \end{equation*}
where $y\in L^1[0,1]$, $\e=\pm 1$ and $\omega\in\bR^+$.
We discuss separately two cases.
\subsection{CASE $\e=-1$} \par 
The Green's function $k$ of BVP
 $$
 - u''(t)+ \omega^2u(t)=y(t),\quad u'(0)=u'(1)=0,
 $$
  is given by (see for instance \cite{Wan} or \cite{Yao}),
$$
\omega\sinh \omega\, k(t,s):=\begin{cases}
\cosh\omega (1-t)\cosh\omega s, & 0\le s\le t\le 1, \\ \cosh\omega (1-s)\cosh\omega t, & 0\le t\le s\le 1.
\end{cases}
$$
Note that $k$ is continuous, positive and satisfies some symmetry properties such as 
$$k(t,s)=k(s,t)=k(1-t,1-s).$$ 
Observe that $\dfrac{\partial k}{\partial t}(t,s)<0$ for $s<t$ and $\dfrac{\partial k}{\partial t}(t,s)>0$ for $s>t$. Therefore we choose 
$$\Phi(s):=\sup_{t\in [0,1]} k(t,s)= k(s,s).$$

For a fixed $[a,b]\subset [0,1]$ we have
$$
c(a,b):=\min_{t\in[a,b]}\min_{s\in[0,1]}\frac{k(t,s)}{\Phi(s)}=\frac{\min\left\{\cosh\omega a,\cosh \omega (1-b)\right\}}{\cosh \omega}.
$$
The choice of $g\equiv 1$ gives
\begin{equation*}
\frac{1}{m}=\sup_{t\in [0,1]}\int_0^1k(t,s)\,ds,
\end{equation*}
and, by direct calculation, we obtain that $m=\omega^2$.\par
The constant $M$ can be computed as follows
\begin{align*}
\frac{1}{M(a,b)}:=&\inf_{t\in[a,b]}\int_a^bk(t,s)ds\\
=&\frac{1}{\omega^2}-\sup_{t\in[a,b]}\frac{\sinh \omega a\cosh\omega(1-t)+\sinh\omega(1-b)\cosh\omega t}{\omega^2\sinh\omega}.
\end{align*}
Let $\xi_1(t):=\sinh \omega a\cosh\omega(1-t)+\sinh\omega(1-b)\cosh\omega t$. Then we have $\xi_1''(t)=\omega^2\xi(t)\ge0$. Therefore the supremum of $\xi_1$ must be attained in one of the endpoints of the interval $[a,b]$. Thus we have
$$
\frac{1}{M(a,b)}=\frac{1}{\omega^2}-\frac{\max\{\xi_1(a),\xi_1(b)\}}{\omega^2\sinh\omega}.
$$
Note that
$$
\xi_1(b)-\xi_1(a)=-2\sinh^2\(\frac{b-a}{2}\omega\)\sinh\omega(a+b-1),
$$
and therefore, $\xi_1(b)\ge\xi_1(a)$ if and only if $a+b\le 1$. Hence we obtain
$$
\frac{1}{M(a,b)}=\frac{1}{\omega^2}-\frac{1}{\omega^2\sinh\omega}\begin{cases}
\sinh \omega a\cosh\omega(1-b)+\sinh\omega(1-b)\cosh\omega b,\, a+b\le 1, \\ \sinh \omega a\cosh\omega(1-a)+\sinh\omega(1-b)\cosh\omega a,\, a+b>1.
\end{cases}
$$
\subsection{CASE $\epsilon=1$}\par The Green's function $k$ of the BVP
 $$
  u''(t)+ \omega^2u(t)=y(t),\quad u'(0)=u'(1)=0,
 $$
  is given by  
$$\omega\sin \omega\, k(t,s):=\begin{cases}
\cos\omega (1-t)\cos\omega s, & 0\le s\le t\le 1, \\ \cos\omega (1-s)\cos\omega t, & 0\le t\le s\le 1.
\end{cases}
$$

In the following Lemma we describe the sign properties of this Green's function with respect to the parameter $\omega$. A similar study has been done, for different BVPs, in \cite[Theorem~4.3]{alb-adr} and \cite[Lemma~5.2]{ac-gi-at-bvp}. The proof is straightforward and is omitted.

\begin{lem}\label{lemsignneu}
We have the following.
\begin{enumerate}
\item $k$ is positive for $\omega\in(0,\pi/2)$.
\item $k$ is positive for $\omega=\pi/2$ except at the points $(0,0)$ and $(1,1)$ where it is zero.
\item $k$ is positive on the strip $(1-\pi/(2\omega),\pi/(2\omega))\times [0,1]$ if $\omega\in(\pi/2,\pi)$.
\item if $\omega>\pi$, there is no strip of the form $(a,b)\times [0,1]$ where $k$ is positive.
\end{enumerate}
\end{lem}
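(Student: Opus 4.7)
The plan is to reduce everything to the elementary sign analysis of the three kinds of factors appearing in the piecewise definition of $k$: $\sin\omega$, the pair $\cos(\omega t)$, $\cos(\omega(1-t))$, and the pair $\cos(\omega s)$, $\cos(\omega(1-s))$. Since $\sin\omega>0$ on $(0,\pi)$, on that range the sign of $k(t,s)$ matches the sign of the cosine product in the numerator, and the zeros and sign changes of $k$ are dictated by where these arguments cross $\pi/2$.

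For part (1), if $\omega\in(0,\pi/2)$ then each of $\omega t$, $\omega(1-t)$, $\omega s$, $\omega(1-s)$ lies in $[0,\pi/2)$, so every relevant cosine is strictly positive and $k>0$ on $[0,1]^2$. For part (2), at $\omega=\pi/2$ these arguments all lie in $[0,\pi/2]$ and the numerator can vanish only when an argument equals $\pi/2$; in the region $s\le t$ this forces either $t=0$ (hence also $s=0$) or $s=1$ (hence also $t=1$), and the region $t\le s$ is handled symmetrically, giving $(0,0)$ and $(1,1)$ as the only zeros. For part (3), assume $\omega\in(\pi/2,\pi)$ and $t\in(1-\pi/(2\omega),\pi/(2\omega))$; then $\omega t$ and $\omega(1-t)$ both lie in $(0,\pi/2)$, for $s\in[0,t]$ one has $\omega s<\pi/2$, and for $s\in[t,1]$ one has $\omega(1-s)<\pi/2$, so the cosine product is strictly positive and $\sin\omega>0$ gives $k(t,s)>0$.

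For part (4) I would argue by contradiction: suppose $\omega>\pi$ and $k>0$ throughout some strip $(a,b)\times[0,1]$, and fix any $t\in(a,b)$. On the half $s\in[0,t]$ the factors $\cos(\omega(1-t))$ and $\sin\omega$ are constant in $s$, so if $\omega t\ge\pi/2$ then $\cos(\omega s)$ vanishes or changes sign inside $[0,t]$, and consequently $k(t,\cdot)$ fails to be strictly positive there. The half $s\in[t,1]$ gives the analogous statement with $\omega(1-t)\ge\pi/2$. Hence strict positivity of $k(t,\cdot)$ on $[0,1]$ would require $\omega t<\pi/2$ and $\omega(1-t)<\pi/2$ simultaneously, i.e.\ $1-\pi/(2\omega)<t<\pi/(2\omega)$; but for $\omega>\pi$ this interval is empty, a contradiction. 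The main subtlety, which I would need to articulate carefully, is to check that the $s$-independent factor $\sin\omega$ cannot offset a sign change of $\cos(\omega s)$ or $\cos(\omega(1-s))$ as $s$ varies; because it does not depend on $s$, it cannot, and that observation is what makes the argument in part (4) go through.
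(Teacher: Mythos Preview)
Your argument is correct in all four parts: the sign of $k$ is controlled by the fixed factor $1/(\omega\sin\omega)$ times a cosine product, and your case analysis of when $\omega t$, $\omega(1-t)$, $\omega s$, $\omega(1-s)$ cross $\pi/2$ is accurate; in particular, the observation in part~(4) that on each half $\{s\le t\}$ and $\{s\ge t\}$ the $s$-independent factor cannot compensate for the zero of $\cos(\omega s)$ (resp.\ $\cos(\omega(1-s))$) is exactly what is needed to force $\omega t<\pi/2$ and $\omega(1-t)<\pi/2$ simultaneously, which is impossible for $\omega>\pi$. The paper itself omits the proof as ``straightforward'', so there is no argument to compare against; your write-up supplies precisely the kind of elementary sign analysis the authors had in mind.
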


Consider $\omega\in(0,\pi)$. Fix $s\in [0,1]$ and note that $\dfrac{\partial k}{\partial t}(t,s)$ never changes sign for $t\in[0, s)$ nor for $t\in(s,1]$.
Thus we can take
\begin{align*}
\Phi(s): & =\sup_{t\in [0,1]} |k(t,s)|=\max\{|k(0,s)|,|k(1,s)|,|k(s,s)|\} \\
 &=\frac{\max\{|\cos\omega(1-s)|,|\cos\omega s|,|\cos\omega s\cos\omega(1-s)|\}}{\omega\sin \omega}\\
 & =\frac{\max\{\cos\omega(1-s),\cos\omega s\}}{\omega\sin \omega}.
\end{align*}
The last equality holds because $\cos(\omega s)\ge-\cos\omega(1-s)\ge 0$ for $s\le1-\pi/(2\omega)$ and $\cos(1-\omega s)\ge-\cos\omega s\ge 0$ for $s\ge\pi/(2\omega)$.\newline
On the other hand, for $[a,b]\subset (\max\{0,1-\pi/(2\omega)\},\min\{1,\pi/(2\omega)\})$, we have
$$
\inf_{t\in[a,b]} k(t,s)=\begin{cases} \min\left\{k(a,s),k(b,s)\right\}, & s\in[0,1]\backslash[a,b], \\ \min\left\{k(a,s),k(s,s),k(b,s)\right\}, & s\in[a,b].\end{cases}
$$
 Now, we study the three intervals $[0,a)$, $[a,b]$ and $(b,1]$ separately.
 
 If  $s\in[0,a)$, we have
\begin{align*}
& \inf_{s\in[0,a)}\frac{\min\left\{k(a,s),k(b,s)\right\}}{\Phi(s)} \\ 
= & \inf_{s\in[0,a)}\frac{\min\left\{\cos \omega(1-a)\cos\omega s,\cos \omega (1-b)\cos\omega s\right\}}{\max\{\cos\omega(1-s),\cos\omega s\}} \\  
= & \inf_{s\in[0,a)}\min\left\{\cos \omega (1-a),\cos \omega (1-b),\frac{\cos\omega(1-a)\cos\omega s}{\cos\omega(1-s)},\frac{\cos \omega (1-b)\cos\omega s}{\cos\omega(1-s)}\right\} \\  = & \min\left\{\cos \omega (1-a),\cos \omega (1-b),\cos \omega a,\cos \omega (1-b)\frac{\cos\omega a}{\cos\omega(1-a)}\right\}\\
= & \min\left\{\cos \omega (1-a),\cos \omega (1-b),\cos \omega a\right\},
\end{align*}
where these equalities hold because $\dfrac{\cos\omega s}{\cos\omega(1-s)}$ is a decreasing function for $s\in [\max\{0,1-\pi/(2\omega)\},1]$ and the function cosine is decreasing in $[0,\pi]$. 
 
 If  $s\in[a,b]$, we have
\begin{align*}
& \inf_{s\in[a,b]}\frac{\min\left\{k(a,s),k(s,s),k(b,s)\right\}}{\Phi(s)} \\ 
= & \inf_{s\in[a,b]}\frac{\min\left\{\cos \omega a\cos\omega (1- s),\cos\omega s\cos\omega(1-s),\cos \omega (1-b)\cos\omega s\right\}}{\max\{\cos\omega(1-s),\cos\omega s\}} \\  
= & \inf_{s\in[a,b]}\min\Bigl\{\cos \omega a,\cos \omega (1-b),\cos\omega s,\cos\omega(1-s),\cos \omega a\frac{\cos\omega (1-s)}{\cos\omega s},\\
&\cos \omega (1-b)\frac{\cos\omega s}{\cos\omega(1-s)}\Bigr\} \\  
= & \min\left\{\cos \omega a,\cos \omega (1-b),\cos\omega b,\cos\omega(1-a)\right\}.
\end{align*}
 If  $s\in (b,1])$, we have
\begin{align*}
& \inf_{s\in(b,1]}\frac{\min\left\{k(a,s),k(b,s)\right\}}{\Phi(s)} \\ = & \inf_{s\in(b,1]}\frac{\min\left\{\cos \omega a\cos\omega (1- s),\cos \omega b\cos\omega (1- s)\right\}}{\max\{\cos\omega(1-s),\cos\omega s\}} \\  = & \inf_{s\in(b,1]}\min\left\{\cos \omega a,\cos \omega b,\cos \omega a\frac{\cos\omega (1-s)}{\cos\omega s},\cos \omega b\frac{\cos\omega (1-s)}{\cos\omega s}\right\} \\  = & \min\left\{\cos \omega a,\cos \omega b,\cos \omega a\frac{\cos\omega (1-b)}{\cos\omega b},\cos \omega (1-b)\right\}\\  = & \min\left\{\cos \omega a,\cos \omega b,\cos \omega (1-b)\right\}.
\end{align*}
Therefore, taking into account these three infima, we obtain that
$$
c(a,b):=\inf_{s\in [0,1]}\frac{\inf_{t\in[a,b]}k(t,s)}{\Phi(s)}=\min\left\{\cos \omega a,\cos \omega (1-a),\cos \omega b,\cos \omega (1-b)\right\}.
$$
In order to compute the constant $m$ we use Lemma \ref{lemsignneu} and the fact that $k(t,s)=k(s,t)$ for all $t,s\in[0,1]$.\\
 If $\omega\in(0,\pi/2)$, the function $k$ is positive and therefore 
 $$
 m=\omega^2.
 $$
 If $\omega\in[\pi/2,\pi)$, we have
$$
\zeta(t):=\int_0^1k^+(t,s)\,ds=\begin{cases}\int_{1-\frac{\pi}{2\omega}}^1k(t,s)\,ds=\dfrac{1}{\omega^2}\dfrac{\cos\omega t}{\sin \omega}, \, t\in[0,1-\frac{\pi}{2\omega}), \\
\dfrac{1}{\omega^2}, \, t\in[1-\frac{\pi}{2\omega},\frac{\pi}{2\omega}], \\
\int_0^{\frac{\pi}{2\omega}}k(t,s)\,ds=\dfrac{1}{\omega^2}\dfrac{\cos\omega(1- t)}{\sin \omega}, \, t\in(\frac{\pi}{2\omega},1]. 
\end{cases}$$
Since
$$
0<\frac{1}{\omega^2}=\int_0^1k(t,s)\,ds=\int_0^1k^+(t,s)\,ds-\int_0^1k^-(t,s)\,ds,
$$
we obtain that  $\int_0^1k^+(t,s)\,ds>\int_0^1k^-(t,s)\,ds$, in such a way that 
$$
m=1/\max_{t\in[0,1]}\zeta(t)=\omega^2\,\sin\omega.
$$
 Also we have
 $$
 \frac{1}{M(a,b)}=\frac{1}{\omega^2}-\sup_{t\in[a,b]}\frac{\cos\omega(1-t)\sin\omega a+\cos\omega t\sin\omega(1-b)}{\omega^2\,\sin\omega}.
 $$
Denote by
 $$\xi_3(t):=\cos\omega(1-t)\sin\omega a+\cos\omega t\sin\omega(1-b),
 $$ 
 and observe that 
 $$
 \xi_3(t)=\omega^2\,\sin\omega\(\int_0^1k(t,s)ds-\int_a^bk(t,s)ds\),
 $$ 
 and therefore we have $\xi_3(t)\geq 0$ for $t\in[a,b]$. Then, we have
$$
\xi_3'(a)\xi_3'(b)=-4\omega^2\cos\[\frac{\omega}{2}(2-a+b)\]\cos\[\frac{\omega}{2}(a+b)\]\sin^2\[\frac{\omega}{2}(a-b)\]\sin\omega(1-b)\sin\omega a.
$$
Now, $\xi_3'(a)\xi_3'(b)<0$ if and only if $2-\pi/\omega<a+b<\pi/\omega$, which is always satisfied for $[a,b]\subset\(1-\pi/(2\omega),\pi/(2\omega)\)$. In such a case, $\xi_3$ has a maximum in $[a,b]$, precisely at the unique point $t_0$ satisfying
$$
\sin\omega t_0=\frac{\sin\omega\sin\omega a}{\cos\omega\sin\omega\a+\sin\omega(1-b)}\cos\omega t_0.
$$
Thus we obtain
\begin{align*}
\xi_3(t_0)  =&\cos\omega\cos\omega b\cos\omega t_0+\cos\omega\sin\omega a\cos\omega t_0-\cos\omega\sin\omega b\cos\omega t_0\\
&+\sin\omega\sin\omega a\sin\omega t_0 \\ 
 =&\(\cos\omega\cos\omega b+\cos\omega\sin\omega a-\cos\omega\sin\omega b+\frac{(\sin\omega\sin\omega a)^2}{\cos\omega\sin\omega\a+\sin\omega(1-b)}\)\cos\omega t_0 \\ 
 =&\frac{\left|\cos\omega\cos\omega b+\cos\omega\sin\omega a-\cos\omega\sin\omega b+\dfrac{(\sin\omega\sin\omega a)^2}{\cos\omega\sin\omega\a+\sin\omega(1-b)}\right|}{\sqrt{\(\dfrac{\sin\omega\sin\omega a}{\cos\omega\sin\omega\a+\sin\omega(1-b)}\)^2+1}}
  \end{align*}
\begin{rem}\label{remint}
 In the particular case $a+b=1$, we have $\xi_3(t)=\sin\omega a[\cos\omega(1-t)+\cos\omega t]$. In this case, observe that $\xi_3(t)=\xi_3(1-t)$ and recall that $\xi_3''(t)=-\omega^2\xi_3(t)\ge0$ ($\xi_3$ is not constantly zero in any open subinterval). Therefore the maximum is reached at the only point where $t=1-t$, that is, $t=1/2$. Hence we obtain
$$
\frac{1}{M(a,b)}=\frac{1-2\cos\dfrac{\omega}{2}\sin\omega a}{\omega^2\sin\omega}.
$$
\end{rem}\label{cintre}
\begin{rem}
The constants $m$, $M(a,b)$, $c(a,b)$ and the function $\Phi$ improve and complement some of the ones used in \cite{Sun2, Sun3, Sun, Wang1, Wan, Yao2, Yao}.
\end{rem}
\section{Examples}\label{secex}
In this Section we present some examples in order to illustrate some of the constants that occur in our theory and the applicability of our theoretical results. Note that the constants that occur are rounded to the third decimal place unless exact.

In the first example we study the existence of multiple nontrivial solutions of a (local) Neumann BVP.
\begin{exa}
Consider the BVP
\begin{equation} \label{eqexasing} 
u''(t)+\(\frac{7\pi}{12}\)^2  u(t) =\frac{\tau_1u^2(t)}{1+t^2} e^{-\tau_2 |u(t)|},\ t\in [0,1],\ u'(0)=u'(1)=0,
\end{equation}
where $\tau_1,\tau_2>0$.

In this case $\omega=\frac{7\pi}{12}$ and, by Lemma~\ref{lemsignneu}, the Green's function is positive on the strip $(1/7,6/7)\times [0,1]$. We illustrate the Remark~\ref{remint} by choosing $[1/4,3/4]\subset (1/7,6/7)$ and we prove, by means of Theorem~\ref{thmones}, the existence of two nontrivial solutions of the BVP~\eqref{eqexasing} which are (strictly) positive on the interval $[1/4,3/4]$. 

In order to do this, note that in our case we have $f(t,u)=\dfrac{\tau_1u^2}{1+t^2} e^{-\tau_2 |u|}$ and $f^0=f^\infty=0$. Furthermore, using the results in the previous Section, we have  
\begin{equation}\label{cex1}
c(1/4,3/4)  =\cos\(\frac{7\pi}{16}\)=\frac{1}{2}\sqrt{2-\sqrt{2+\sqrt{2}}}= 0.195,
\end{equation}
and 
$$
M=M(1/4,3/4) = 7.029.
$$
Henceforth we work in the cone 
$$K=\{u\in C[0,1]: \min_{t \in [1/4,3/4]}u(t)\geq c \|u\|\},$$ 
with $c$ given by~\eqref{cex1}.

We set
$$
\hat{f}_0:=2\frac{c-1}{\ln c}c^{\frac{c}{c-1}}M=10.289.
$$
We now prove that if $\tau_1/\tau_2>\hat{f}_0$, then the condition $(Z_1)$ is satisfied.
Let 
$$
\hat{f}(u):=\inf_{t\in[0,1]}\frac{\tau_1u^2}{1+t^2}e^{-\tau_2 u}=\dfrac{1}{2}\tau_1u^2e^{-\tau_2 u},\ u\in[0,+\infty).
$$

Note that $\hat{f}'$ only vanishes at $0$ and $2/\tau_2$, $\hat{f}$ is strictly increasing in the interval $(0, \frac{2}{\tau_2})$ and  is strictly decreasing in the interval $(\frac{2}{\tau_2}, +\infty)$. Thus $\hat{f}$ assumes the maximum in the unique point $2/\tau_2$ and, since $\hat{f}(0)=0$ and $\lim_{x\to +\infty}\hat{f}(x)=0$,   the inverse image by $\hat{f}$ of any strictly positive real number different to $\hat{f}(\frac{2}{\tau_2})$ has either $2$ or no points. 
Let for $x\in [0,+\infty)$
$$
l(x):=\hat{f}(x)-\hat{f}(x/c).
$$
Take $\varepsilon\in (0,\frac{2c}{\tau_2})$ and note that $l(\varepsilon)<0$ in view of the strict monotonicity of $\hat{f}$. Moreover, if $\eta>\frac{2}{\tau_2}$, then $l(\eta)>0$.
Since the function $l$ is continuous, there exists a point $\bar{x}\in (\varepsilon,\eta)$ such that $l(\bar{x})=0$, that is, $\hat{f}(\bar{x})$=$\hat{f}(\bar{x}/c)= p$. 
From  the type of monotonicity of $f$, for $x\in [\bar{x},\bar{x}/c]$ we have $p\leq\hat{f}(x)$. Hence we have
$$
\hat{f}(\bar{x})=\hat{f}(\bar{x}/c)\Ra \bar{x}=e^{\tau_2(\bar{x}/c-\bar{x})}\bar{x}/c\Ra \bar{x}=\frac{2c\ln c}{\tau_2(c-1)},\bar{x}/c=\frac{2\ln c}{\tau_2(c-1)}.
$$
Thus, if we impose  $p>M\,\bar{x}$, we obtain
$$
M\frac{2c\ln c}{\tau_2(c-1)}=M\,\bar{x}<\hat{f}(\bar{x})=\hat{f}(\bar{x}/c)=\tau_1\(\frac{2\ln c}{\tau_2(c-1)}\)^2c^{-\frac{2}{c-1}},
$$
that is, $\tau_1/\tau_2>\hat{f}_0$.
\end{exa}
We now present an example for a BVP subject to two nonlocal BCs.
\begin{exa}
Consider the BVP
\begin{equation}\label{exan3}
\begin{aligned}
 u''(t)+\omega^2u(t)& =e^{-|u(t)|},\ t\in [0,1],\\
u'(0)&=u(0)+u(1),\\
u'(1)&=\int_0^{1}u(t)\sin\pi t dt,
\end{aligned}
\end{equation}
where $\omega\in(\pi/2,\pi)$.
We rewrite the BVP~\eqref{exan3} in the integral form
\begin{equation*}
Tu(t)={\gamma}(t)\alpha[u]+{\delta(t)}{\beta}[u] +\int_0^1 k(t,s)f(s,u(s))\,ds,
\end{equation*}
where
\begin{align*}
\c(t)=&\cos\omega(1- t)/(\omega\sin\omega),\,\,\,
\d(t)=\cos(\omega t)/(\omega\sin\omega),\\
\a[u]= & u(0)+u(1),\,\,\,
\b[u]= \int_0^{1}u(t)\sin\pi t dt.
\end{align*}
In order to verify condition $(S_1)$ of Theorem~\ref{thmcasesS}, we take $[a,b]\subset (1-\pi/(2\omega),\pi/(2\omega))$ and let $f(u)=e^{-|u|}$. 

Note that the condition $f^\infty=0$ implies that the condition $(I_\rho^1)$ is satisfied for $\rho$ sufficiently large (hence $i_K(T,K_R)=1$ for $R$ big enough).

Now it is left to prove that $i_K(T,V_\rho)=0$ for $\rho$ small enough (condition $(I_\rho^0)$).\par
We have
\begin{align*}
\a[\c]=&\a[\d]=\sqrt{2}\frac{\sin\left(\frac{\pi }{4}+\omega \right)}{\omega\sin\omega},\ 
\b[\c]=  \b[\d]=\frac{\pi  \cot\left(\frac{\omega }{2}\right)}{\pi ^2 \omega -\omega ^3},\\
D:=D(\omega)=& \frac{\left(\pi ^2 \omega -\omega ^3\right)\sin(\omega /2)-\left(\pi +\pi ^2-\omega ^2\right) \cos\left(\frac{\omega }{2}\right)}{\left(\pi ^2 \omega -\omega ^3\right)\sin(\omega /2)},\\
\mathcal{K}_A(s)= &\frac{\cos\omega s+\cos(\omega[ 1-s])}{\omega\sin\omega },\\
\mathcal{K}_B(s)= & \frac{\pi  \cos\omega s \cot(\omega/2)-\omega\sin\pi s+\pi\sin\omega s}{\pi ^2 \omega -\omega ^3}.
\end{align*}

Observe that $\a[\c],\a[\d],\b[\c],\b[\d],\mathcal{K}_A(s),\mathcal{K}_B(s)\ge 0$ and $\a[\c],\b[\d]<1$ for $\omega\in(\pi/2,\pi)$.\\ Also, we have $D(\omega)>0$ for $\omega\in(\pi/2,\pi)$. In fact, $D(\omega)$ is a strictly increasing function (since $D'(\omega)>0$ for $\omega\in(0,\pi)$),  $\lim_{\omega\rightarrow 0^+}D(\omega)=-\infty$ and $D(\pi)=1-\frac{1}{4 \pi }>0$, so there is a unique zero $\omega_0$ of $D$ in $(0,\pi)$ but $\omega_0=1.507<\pi/2.$\par
Now, $\c$ is increasing and $\d$ is decreasing, therefore $c_2=\c(a)/\c(1)=\cos(\omega[1-a])$, $c_3=\d(b)/\d(0)=\cos\omega b$. On the other hand, we have
\begin{align*}
 f_{\rho,\rho/c}= & f(\rho/c)/(\rho/c)=e^{-\rho/c}c/\rho,\\
 c(a,b)= & \min\{\cos \omega a,\cos\omega(1-a),\cos\omega b,\cos\omega(1-b)\},\\
 \int_a^b\mathcal{K}_A(s)ds= & \frac{\sin\omega b-\sin\omega a+\sin\omega(1-a)-\sin\omega(1-b)}{\omega ^2\sin\omega},\\
 \int_a^b\mathcal{K}_B(s)ds= &\frac{\pi ^2 \left(\cot \left(\frac{\omega }{2}\right) (\sin (b \omega )-\sin (a \omega ))+\cos (a \omega )-\cos (b \omega )\right)}{\omega ^2 \left(\pi ^3-\pi  \omega ^2\right)}\\
 &- \frac{\omega ^2( \cos (\pi  a)+ \cos (\pi  b))}{\omega ^2 \left(\pi ^3-\pi  \omega ^2\right)}.
 \end{align*}
 Taking $a+b=1$, we obtain
 \begin{align*}
  \int_a^b\mathcal{K}_A(s)ds= & \frac{2 \csc \left(\frac{\omega }{2}\right) \sin \left(\frac{1}{2} (\omega -2 a \omega )\right)}{\omega ^2},\\
  \int_a^b\mathcal{K}_B(s)ds= &-\frac{2 \left(\omega ^2 \cos (\pi  a)-\pi ^2 \cos (a \omega )+\pi ^2 \cot \left(\frac{\omega }{2}\right) \sin (a \omega )\right)}{\omega ^2 \left(\pi ^3-\pi  \omega ^2\right)},\\
  c=\cos\omega a.
  \end{align*}
Condition $(I_\rho^0)$ is equivalent to
  $$ f_{\rho,\rho/c}\cdot \inf_{t\in[a,b]}\left\{q(t,\omega, a)+\int_a^bk(t,s)ds\right\}>1,$$
  where
  \begin{multline*} q(t,\omega,a)=\\  \frac{2 \csc (\omega ) \left(\pi  \csc \left(\frac{\omega }{2}\right) \sin \left(\frac{1}{2} (\omega -2 a \omega )\right) (\pi  \cos (t \omega )+(\pi -\omega ) (\omega +\pi ) \cos (\omega -t \omega ))\right)}{\pi  \omega ^2 \left((\pi -\omega ) \omega  (\omega +\pi )-\left(-\omega ^2+\pi ^2+\pi \right) \cot \left(\frac{\omega }{2}\right)\right)}\\
 -\frac{2\omega \csc (\omega )   \cos (\pi  a) (\sin (t \omega )-\sin (\omega -t \omega )+\omega  \cos (t \omega ))}{\pi  \omega ^2 \left((\pi -\omega ) \omega  (\omega +\pi )-\left(-\omega ^2+\pi ^2+\pi \right) \cot \left(\frac{\omega }{2}\right)\right)}.
  \end{multline*} 
 Using Remark~\ref{easyidx0}, it is enough to check
 $$ f_{\rho,\rho/c}\cdot\Bigl(\inf_{t\in[a,b]}q(t,\omega,a)+\frac{1}{M(a,b)}\Bigr)>1.$$
 It can be checked numerically that $\inf_{t\in[a,b]}q(t,\omega,a)=q(a,\omega,a)$.
 Hence, we need
  $$ \frac{e^{-\rho/\cos\omega a}\cos\omega a}{\rho}\Bigl( q(a,\omega,a)+\frac{1-2\cos\frac{\omega}{2}\sin\omega a}{\omega^2\,\sin\omega}\Bigr)>1.$$
Since $\lim_{\rho\rightarrow 0}e^{-\rho/\cos\omega a}/\rho=+\infty$, the inequality is satisfied for $\rho$ small enough and, hence, we have proved that the BVP~\eqref{exan3} has at least a non trivial solution in the cone $K$.
 \end{exa} 
We now study an example that occurs in an earlier article by Bonanno and Pizzimenti~\cite{BonPiz}.
 \begin{exa}
Consider the BVP
 \begin{equation}\label{lastne} 
 -u''(t)+u(t)=\l te^{u(t)}, t\in [0,1],\quad u'(0)=u'(1)=0.
 \end{equation}
 In \cite{BonPiz} the authors establish the existence of at least one positive solution such that $\|u\|<2$ for $\l\in(0,2e^{-2})$. 
 
 The BVP \eqref{lastne} is equivalent to the following integral problem
 \begin{equation*}
 u(t)=\int_0^1k(t,s)g(s)f(u(s))ds, 
 \end{equation*}
 where
 $$
 g(s)=s,\ f(u)=\l e^u
 $$
 and
 $$
k(t,s):=\frac{1}{ \sinh(1)}\begin{cases}
 \cosh (1-t)\cosh s, & 0\le s\le t\le 1, \\ \cosh (1-s)\cosh t, & 0\le t\le s\le 1.
 \end{cases}
 $$
The kernel $k$ is positive, by the results Section~5 satisfies $(C_1)$-$(C_8)$ with $[a,b]=[0,1]$. Thus we work in the cone 
$$K=\{u\in C[0,1]: \min_{t \in [0,1]}u(t)\geq c \|u\|\},$$ 
where $$ c=c(0,1)=  1/\cosh 1=0.648.$$

 We can compute the following constants
 \begin{align*}
 m=& \frac{e+1}{2}=1.859,\\
M(0,1)= & \frac{e+1}{e-1}=2.163,\\
 f^{0,\rho}= & f_{\rho,\rho/c}=\l e^\rho/\rho.
 \end{align*}

Taking $\rho_2=2$ we have $(I^1_{\rho_2})$ is satisfied for $\l<(e+1)e^{-2}$, and taking $0<\rho_1<c/2$ we have $(I^0_{\rho_1})$ for $\l>[(e+1)/(e-1)]\rho_1 e^{-\rho_1}$.

Hence, the condition $(S_1)$ of Theorem~\ref{thmcasesS} is satisfied whenever  
$$
 \l\in\(0,\frac{e+1}{e^{2}}\)
 \supset(0,2e^{-2}).
 $$

Furthermore, reasoning as in \cite{gippmt}, when $\lambda=\dfrac{1}{4}$ the choice of $\rho_2=0.16$ and $\rho_1=0.1$ gives the following localization for the solution
$$
0.064\leq u(t)\leq 0.16,\ t\in [0,1].
$$

An application of Theorem~\ref{noext} gives that for 
$$\lambda >\frac{e+1}{e(e-1)}= 0.797,$$
there are no solutions in $K$ (the trivial solution does not satisfy the differential equation). Furthermore note that $T:P\to K$; this shows that there are no positive solutions for the BVP~\eqref{lastne} when $\lambda>\dfrac{e+1}{e(e-1)}$.
\end{exa}

\section*{Acknowledgements}
The authors would like to thank the anonymous Referee for the careful reading of the manuscript and for the constructive comments. G. Infante and P. Pietramala were partially supported by G.N.A.M.P.A. - INdAM (Italy). F. A. F. Tojo was supported by FEDER and Ministerio de Educaci\'on y Ciencia, Spain, project MTM2013-43014-P; FPU scholarship, Ministerio de Educaci\'on, Cultura y Deporte, Spain and a Fundaci\'on Barrie Scholarship. This paper was mostly written during a visit of F. Adri\'an F. Tojo to the Dipartimento di Matematica e Informatica of the Universit\`a della Calabria.  Adri\'an Tojo would like to acknowledge his gratitude towards the aforesaid Dipartimento, specially towards the co-authors of this paper, for their affectionate reception and their invaluable work and advise.

\end{document}